\newtheorem{lemma}{Lemma}
\newtheorem{theorem}[lemma]{Theorem}
\newtheorem{corollary}[lemma]{Corollary}
\theoremstyle{definition}
\newtheorem{definition}[lemma]{Definition}
\renewcommand{\d}{\mathop{}\!\mathrm{d}}
\renewcommand{\i}{\mathrm{i}}
\newcommand{\new}[1]{#1}
\title[Periodic orbits and large delay]{On the stability of periodic
  orbits in delay equations with large delay}
\author[Sieber, Wolfrum, Lichtner and Yanchuk]{}
\subjclass{Primary: 34K13, Secondary: 34K20, 34K06.}
\keywords{periodic solutions , large delay , stability ,
  asymptotic continuous spectrum , strongly unstable spectrum ,
  Floquet multipliers.}
\email{j.sieber@exeter.ac.uk}
\email{matthias.wolfrum@wias-berlin.de}
\email{mark.lichtner@wias-berlin.de}
\email{yanchuk@math.hu-berlin.de}
\thanks{The authors acknowledge the support of DFG Research Center
  {\sc Matheon} \textquotedblleft{}Mathematics for key
  technologies\textquotedblright{} under the project D21}
\begin{document}
\maketitle

\centerline{\scshape Jan Sieber}
\medskip
{\footnotesize
   \centerline{University of Exeter, UK}
} 

\medskip

\centerline{\scshape Matthias Wolfrum$^1$, Mark Lichtner$^1$ and
  Serhiy Yanchuk$^2$} \medskip {\footnotesize
  \centerline{$^1$Weierstrass Institute for Applied Analysis and
    Stochastics, Berlin, Germany} \centerline{$^2$Humboldt University
    of Berlin, Institute of Mathematics, Berlin, Germany}

}

\bigskip


\begin{abstract}\noindent
  We prove a necessary and sufficient criterion for the exponential
  stability of periodic solutions of delay differential equations with
  large delay. We show that for sufficiently large delay the Floquet
  spectrum near criticality is characterized by a set of curves, which we
  call asymptotic continuous spectrum, that is independent on the
  delay. 
\end{abstract}

\section{Introduction}
\label{sec:intro}
Delay-differential equations (DDEs) are similar to ordinary
differential equations (ODEs) except that the right-hand side may
depend on the past. For example, they could be of the form
\begin{equation}\label{eq:introdde}
  \dot x(t)=f(x(t),x(t-\tau))
\end{equation}
where $x(t)$ is a vector in $\mathbb{R}^n$ and the delay $\tau>0$ 
decides how far one looks into the past. 
When studying DDEs as dynamical systems
one notices that equilibria do not depend on the delay $\tau$. More
precisely, their location and number is independent of
$\tau$. However, their stability changes significantly when one varies
$\tau$, an effect that is well known and of practical importance in
engineering and control \cite{LMNS09,S89}. Exponential stability is
given by the spectrum of the linearization of the DDE in its
equilibrium. This spectrum, in turn, can be expressed as roots of an
analytic function (a polynomial of exponentials
$\lambda\mapsto\exp(-\lambda\tau)$). Lichtner \emph{et al}
\cite{LWY11} classified rigorously which types of limits this spectrum
can have as $\tau$ approaches infinity. Roughly speaking, for sufficiently
large $\tau$ all except maximally $n$ eigenvalues form bands near the imaginary
axis
. After rescaling their real part by $1/\tau$ one finds
that these bands converge to curves, called \emph{asymptotic continuous
  spectrum}. They are given as root curves of parametric polynomials,
and are, thus, much easier to compute than the eigenvalues of the
singularly perturbed large-delay problem. Of practical relevance are
then stability criteria based entirely on the asymptotic spectra that
guarantee the stability of an equilibrium for sufficiently large
delays $\tau$.

This paper gives a similar result for periodic orbits of
\eqref{eq:introdde}. In contrast to equilibria, periodic orbits change
as the delay $\tau$ varies, such that the statement about
``independence'' of the delay (or, rather, re-appearance) has to be
formulated more carefully. Let us look at a two-dimensional example to
illustrate the observation made by Yanchuk \& Perlikowski \cite{YP09}:
\begin{align}
  \label{eq:exx}
  \dot x(t)&=\alpha x(t)-2\pi y(t)-x(t)\left[x(t)^2+y(t)^2\right]\\
  \label{eq:exy}
  \dot y(t)&=2\pi x(t)+\alpha y(t)-y(t)\left[x(t)^2+y(t)^2\right]+y(t-\tau)
\end{align}
where we fix $\alpha\approx-0.1$ and vary
$\tau$. System~\eqref{eq:exx}--\eqref{eq:exy} consists of the normal
form for the supercritical Hopf bifurcation with an additional delayed
term $y(t-\tau)$ in the second equation, which breaks the rotational
symmetry of the instantaneous terms.
\begin{figure}[t]
  \centering
  \includegraphics[width=0.8\textwidth]{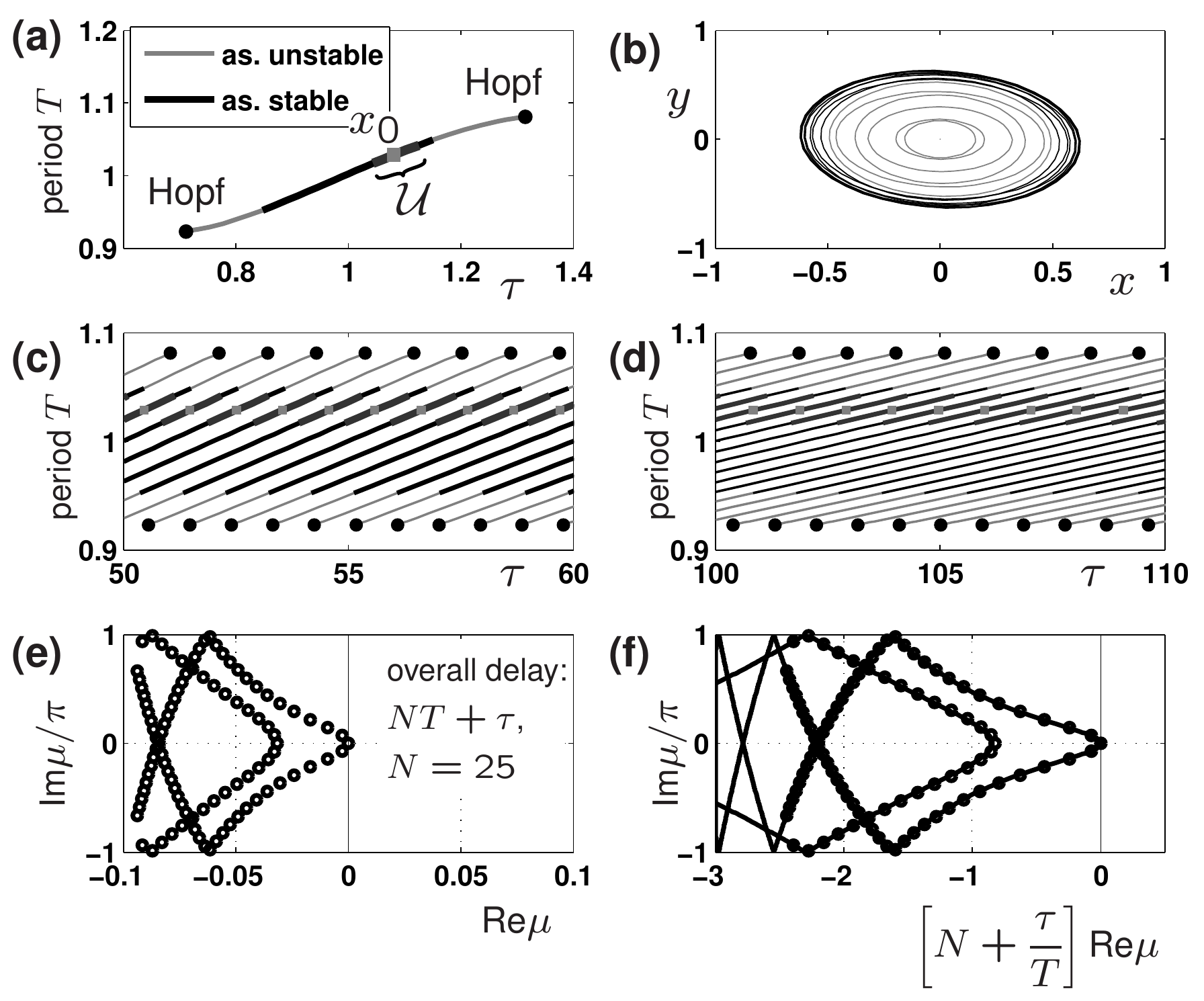}
  \caption{Illustration of stability 
    for the example \eqref{eq:exx}--\eqref{eq:exy} (computed with
    DDE-Biftool \cite{ELS01,RS07,SER02}): (a) bifurcation diagram of
    periodic orbits in the $(\tau,T)$-plane; (b) phase portraits of
    periodic orbits along branch; (c,d) images of the branch under
    transformation $\tau\mapsto NT(\tau)+\tau$, illustrating overlaps
    and coexistence; (e) dominant Floquet exponents of selected
    periodic orbit (grey square in panel (a)); (f) dominant Floquet
    exponents after rescaling of real parts and curves of asymptotic
    continuous spectrum. Parameters: $\alpha=-0.10779$ for (a--d),
    $\tau=1.081$, $N=25$ for (c,d).}
  \label{fig:illu}
\end{figure}
Numerically, one can observe that the system has a family of periodic
orbits for delays between $\tau\approx0.7$ and $\tau\approx1.3$. The
period and the phase portrait projected onto the $(x,y)$-plane of
these orbits are shown in Figure~\ref{fig:illu}(a) and (b). The family
repeats, the orbits keeping their shape, for every integer $N$ if we
change
the horizontal axis in Figure~\ref{fig:illu}(a) to
$NT(\tau)+\tau$. The transformation $NT(\tau)+\tau$ is not just a
parallel shift, since the dependence $T(\tau)$ is, generically,
nontrivial (in the example $\d T/\d \tau>0$).  This leads to an
overlapping of the families for large $\tau$ as shown in
Figure~\ref{fig:illu}(c) and (d). 

Specifically, let us consider a regular periodic orbit $x_0$, existing
for a fixed value $\tau_0$ of $\tau$ (regularity means that the unit
Floquet multiplier of $x_0$ is simple, see
Section~\ref{sec:nontech}). Then $x_0$ persists for $\tau$ in a small
neighborhood of $\tau_0$ such that we have a branch of periodic orbits
depending locally on the parameter $\tau$ close to $\tau_0$. Let
$\mathcal{U}$ be a small neighborhood of this orbit along the
branch. Along this branch the period $T$ of the orbit is a smooth
function $T(\tau)$ of $\tau$. In Figure~\ref{fig:illu}(a) an example
orbit $x_0$ is indicated by a grey square at $\tau_0=1.081$ and its
neighborhood $\mathcal{U}$ is highlighted by a slight thickening of
the line. If we assume that $T'(\tau_0)\neq0$ (which is a genericity
condition) then $T(\tau)$ will have a slope uniformly bounded away
from zero for all $\tau$ in the neighborhood corresponding to
$\mathcal{U}$. This implies that the images of the neighborhood
$\mathcal{U}$ are stretched proportionally to $N$ under the
transformation $\tau\mapsto NT(\tau)+\tau$.  Thus, for any given large
$\tau$ a large number of periodic orbits from $\mathcal{U}$ coexist,
and the number of coexisting orbits is proportional to $\tau$ (see the
overlapping images of $\mathcal{U}$ in Figure~\ref{fig:illu}(c) and
(d)).

The next question to ask is: which of those many coexisting periodic
orbits for large delays are dynamically stable? Is it possible to
derive sharp stability criteria for the large-delay orbits based on
quantities independent of the delay? More precisely, what is the
stability of a given periodic orbit $x_0$ (such as the one indicated
by a grey square in Figure~\ref{fig:illu}(a) for delays
$\tau=NT(\tau_0)+\tau_0$ as $N$ goes to infinity (this would be the
sequence of orbits indicated by grey squares in
Figure~\ref{fig:illu}(c,d)). 

Our example suggests that the stability for $\tau=NT(\tau_0)+\tau_0$
where $N\to\infty$ can indeed be determined from spectral properties
of the periodic orbit at the small delay $\tau_0$.
Figure~\ref{fig:illu}(e) shows the spectrum of the periodic orbit
$x_0$ highlighted by a grey square in Figure~\ref{fig:illu}(a) for
$N=25$. One can see that, first, its Floquet exponents form bands,
and, second, there is a large number of Floquet exponents very close
to the imaginary axis (note the scale of the horizontal axis in
Figure~\ref{fig:illu}(e)). Figure~\ref{fig:illu}(f) illustrates one of
the results of this paper: after rescaling the real part, Floquet
exponents converge to curves for $N\to\infty$. These curves, the
\emph{asymptotic continuous spectrum} are computable by solving
\emph{regular} periodic boundary value problems parametrized by
$\omega$, the vertical axis in Figure~\ref{fig:illu}(f). Since the
original nonlinear DDE is autonomous, one of the curves of the
asymptotic continuous spectrum touches the imaginary axis. Similar to
the equilibrium case, we establish that the asymptotic continuous
spectrum (together with the strongly unstable spectrum, see
Section~\ref{sec:nontech}) determines the stability of the periodic
orbit $x_0$ for sufficiently large $N$. 

We have colored the branch in Figure~\ref{fig:illu}(a) already
according to the conclusions from the asymptotic spectra. The part
that is displayed as a black curve in Figure~\ref{fig:illu}(a) is the
part of the branch that will be exponentially stable as $N$ tends to
infinity, whereas the grey part will be exponentially unstable, having
a large number of weakly unstable Floquet
exponents. 

In short, the observation by Yanchuk \& Perlikowski \cite{YP09}
implies that, if we find a periodic orbit $x_0$ of the DDE
\eqref{eq:introdde} for a fixed small delay $\tau_0$, then (under some
genericity conditions) the DDE \eqref{eq:introdde} has a large number
of similar orbits coexisiting for any sufficiently large delay
$\tau$. This paper establishes a sharp critierion (again under some
genericity conditions) determining if all of these coexisting periodic
orbits are dynamically stable. It does so by providing formulas for
the Floquet exponents of the periodic orbit with delay
$\tau=NT(\tau_0)+\tau_0$ that are independent of $N$ but valid
asymptotically for large $N$.  For the example DDE
\eqref{eq:exx}--\eqref{eq:exy}, our results imply that the system has
a large number of coexisting stable periodic orbits for every
sufficiently large delay $\tau$ (as suggested by the
Figures~\ref{fig:illu}(c,d) where the black parts of the branches are
stable).

Section~\ref{sec:nontech} gives a non-technical overview of the
results gradually developed and proven in the later sections. One
central part of our paper is the construction of a characteristic
function
\begin{displaymath}
  \mu\mapsto h\left(\mu,\exp(-(NT(\tau_0)+\tau_0)\mu)\right)\mbox{,}  
\end{displaymath}
the roots of which are the Floquet exponents of the periodic orbit for
delay $\tau_0+NT(\tau_0)$, and for which we can study the limit
$N\to\infty$. This construction is given in
Section~\ref{sec:charmat}. The existence of this function $h$ permits
us to follow the approach from \cite{LWY11} and to extend their
techniques to the case of periodic solutions.  In the following
sections~\ref{sec:strong} and \ref{sec:pc} we describe two parts of
the Floquet spectrum that show a different scaling behavior for large
$N$ (and, thus, $\tau$). The strongly unstable spectrum, converging to
a finite number of asymptotic Floquet exponents which are determined
by the instantaneous terms, is investigated in
Section~\ref{sec:strong}. Then, in Section~\ref{sec:pc}, we analyze
the Floquet exponents given by the asymptotic continuous spectrum
shown in Fig~\ref{fig:illu}(f).  Based on these results, we can
then prove in Section~\ref{sec:stab} our main result, a criterion
about asymptotic stability based on the location of the asymptotic
continuous and strongly unstable spectrum.

\new{In contrast to the case of spectra at equilibria, where the asymptotic
continuous and strongly unstable spectrum in many cases can be
calculated explicitly (see \cite{LWY11}), the corresponding parts of
the Floquet spectrum of a periodic orbit $x_0$, can typically only be
computed numerically.  This limitation is not specific to our
  results but applies equally to most stability results based on the
  Floquet spectrum for ODEs.  Even the periodic orbit itself is in
most cases only computable with numerical methods. An exception is the
case, where the periodic orbit is at the same time a symmetry orbit of
the system. In this case, examples of asymptotic continuous and
strongly unstable Floquet spectrum have been calculated explicitly in
\cite{WY06,YW10}.

Conversely, the results presented in this paper provide an
  approach to approximate Floquet spectra numerically for large delays
  $\tau$. If one uses numerical methods on problems with large delays,
  one faces the difficulty that the size of the matrix arising in the
  discretized eigenvalue problem grows not only with the desired
  accuracy (which is natural) but also with $\tau$, even if the period
  of the orbit remains bounded. This is the case for the numerical
  methods used in DDE-Biftool \cite{RS07}. This increase is to be
  expected because the number of Floquet exponents close to the
  imaginary axis increases with $\tau$ (see
  Theorem~\ref{thm:main}). In contrast to this, the asymptotic spectra
  can be computed with the same numerical method as in DDE-Biftool and
  a matrix size that is uniformly bounded for large $\tau$.  This
  paper does not discuss the details of the numerical computation of
  asymptotic spectra. However, our construction of the characteristic
  function $h$ is uniform with respect to $\tau$ and could in
  principle be implemented numerically. In practice, it is better to
  apply the same analysis as is done in this paper to the large
  discretized eigenvalue problem.}

In Fig. \ref{fig:illu}(f) we demonstrated the good 
agreement between original Floquet exponents and asymptotic spectra already in the situation
of a moderately large delay, where the original Floquet exponents are still computable.
Moreover, our stability criterion, which can be reliably verfied in this way, is valid independently
on the actual value of the large delay.
\section{Basic concepts and overview of the results}
\label{sec:nontech}
\subsection{Periodic orbits, stability, and Floquet exponents}
Let $x_*$ be a periodic orbit of the $n$-dimensional autonomous
nonlinear delay differential equation (DDE)
\begin{equation}
  \label{eq:gendde}
  \dot x(t)=f(x(t),x(t-\tau))\mbox{,}
\end{equation}
that is, $x_*(t)$ satisfies \eqref{eq:gendde} for all times $t$ and
has period $T$: $x_*(t)=x_*(t+T)$ for all $t\in\mathbb{R}$. Without loss of
generality we may assume that $T=1$ (this can be achieved by a
rescaling of time and the delay $\tau$). If the period $T$ is equal to
$1$ then $x_*$ is also a periodic orbit of
\begin{equation}
  \label{eq:nonlinddeN}
  \dot x(t)=f(x(t),x(t-\tau-N))\mbox{,}  
\end{equation}
where $N$ is a natural number 
and $\tau\in[0,1)$. We denote the restriction of the periodic function
$x_*:\mathbb{R}\mapsto\mathbb{R}^n$ to the interval $[-\tau-N,0]$ also by $x_*$ such
that $x_*$ is an element of $C([-\tau-N,0];\mathbb{R}^n)$.
  

We are concerned with the question whether the periodic orbit $x_*$
is stable or unstable for sufficiently large $N$ in the following
sense:
\begin{definition}[Exponential orbital stability and instability]
  \label{def:stab}
  \ \\ Let $X(t;\cdot)$ be the semiflow on $C([-\tau-N,0];\mathbb{R}^n)$ induced by
  DDE \eqref{eq:nonlinddeN}. 
  The periodic orbit $x_*$ is called \textbf{exponentially orbitally
    stable} if there exists a decay rate $\gamma>0$ such that all
  initial history segments $x_0$ in a
  neighborhood of $x_*$ satisfy
  \begin{displaymath}
    \|X(t;x_0)-X(t+t_0;x_*)\|_\infty\leq 
    C\exp(-\gamma t)\|x_0- x_*\|_\infty
  \end{displaymath}
  for some time shift $t_0$ and some constant $C\geq0$.

  Similarly, $x_*$ is called \textbf{exponentially unstable} if there
  exists a growth rate $\gamma>0$, a neighborhood $\mathcal{N}$ of
  $x_*$ and a constant $C>0$ such that one can find initial
  history segments $x_0\neq x_*$ arbitrarily close to $ x_*$ that satisfy
  \begin{displaymath}
    \|X(n;x_0)-X(n;x_*)\|_\infty\geq
    C\exp(\gamma n)\|x_0- x_*\|_\infty>0
  \end{displaymath}
  for all $n\in\mathbb{N}$ as long as $X(n;x_0)$ stays in the neighborhood
  $\mathcal{N}$.
\end{definition}
This is the standard definition for stability of periodic orbits used
also for ODEs except that the phase space is
$C([-\tau-N,0];\mathbb{R}^n)$ instead of $\mathbb{R}^n$. The notation
$\|\cdot\|_\infty$ refers to the usual maximum norm in
$C([-\tau-N,0];\mathbb{R}^n)$.

 Textbook theory of delay equations reduces the stability
problem to the problem of finding eigenvalues of the linear map
$M_N:C([-\tau-N,0];\mathbb{C}^n) \to C([-\tau-N,0];\mathbb{C}^n)$, which is given as
the time-$1$ map of the linear DDE
\begin{equation}
  \label{eq:lindde}
  \dot x(t)=A(t)x(t)+B(t)x(t-\tau-N)\mbox{,}
\end{equation}
where the time-dependent $n\times n$-matrices $A(t) \in \mathbb{R}^{n
  \times n}$ and $B(t) \in \mathbb{R}^{n \times n}$ are the partial
derivatives of $f$ in $x_*$: $A(t)=\partial_1f(x_*(t),x_*(t-\tau))$
and $B(t)=\partial_2f(x_*(t),x_*(t-\tau))$ \cite{HL93}. If the
right-hand side $f$ of the nonlinear problem \eqref{eq:gendde} is
smooth in its arguments then the matrices $A$ and $B$ are also smooth
periodic function of time $t$ in the interval $[0,1]$.

Since the time-$1$ map $(M_N)^{N+1}$ is 
compact 
the spectral theory for compact operators and the polynomial spectral
mapping theorem imply that the spectrum $\sigma(M_N)$ consists of a
sequence of eigenvalues of finite multiplicity accumulating only at
zero (zero is the only element of $\sigma(M_N)$ that is not in the
point spectrum). Also, $\lambda=1$ is always an eigenvalue of $M_N$
because $\dot x_*(t)$ satisfies \eqref{eq:lindde} and has period $1$.

The periodic orbit $x_*$ is exponentially (\emph{orbitally}) stable if
and only if
\begin{itemize}
\item[(1)] the eigenvalue $1$ of $M_N$ is algebraically simple, and
\item[(2)] all other eigenvalues of $M_N$ have modulus less than $1$.
\end{itemize}
Similarly, it is exponentially unstable if at least one eigenvalue has
modulus greater than $1$.

Thus, the stability of iterations of $M_N$ is determined by its
eigenvalues.  We also use the term orbitally stable for the map $M_N$,
meaning that $M_N$ satisfies both of the above conditions. 

The state space of  DDE \eqref{eq:lindde} is the function space
$C([-\tau-N,0];\mathbb{R}^n)$.
Thus, initial value problems for
\eqref{eq:lindde} require specifying an infinite-dimensional initial
condition. Similarly, one expects that a boundary value problem for a
DDE requires the specification of infinitely many boundary
conditions. However, \emph{periodic boundary value problems} are
easier to formulate: for example, a solution of the periodic
boundary value problem for the general DDE~\eqref{eq:gendde} for
period $T=1$ is simply a function $x\in C^1([-1,0];\mathbb{R}^n)$ satisfying
\begin{align}
    \dot x(t)&=f(x(t),x(t-\tau)_{\mathrm{mod}[-1,0]})    \label{eq:genbvp}\\
    x(-1)&=x(0)\label{eq:genperbc}
\end{align}
where the notation $(t-\tau)_{\mathrm{mod}[-1,0]}$ stands for $t-\tau+k$ if
$t-\tau\in[-k-1,-k)$ and $k\in\mathbb{Z}$ is the integer part of $\tau-t$.
Since $x$
is continuous and satisfies the periodicity condition
\eqref{eq:genperbc} it can be extended continuously to a continuous
function on the whole real line by defining
$x(s)=x(s_{\mathrm{mod}[-1,0]})$. Consequently, the right-hand side of
\eqref{eq:genbvp} is continuous for all $t\in[-1,0]$, which guarantees
that $x$ can really satisfy the differential equation pointwise and is
an element of $C^1$. The solution $x$ then automatically satisfies
$\dot x(-1)=\dot x(0)$, and, thus, by induction is as smooth as the
right-hand side $f$. In this respect, periodic boundary value problems
for DDEs are similar to boundary value problems for ODEs. In
Section~\ref{sec:charmat} we will reduce linear periodic
boundary value problems for DDEs to low-dimensional linear systems of
algebraic equations.

\begin{definition}\label{def:Floq}
  We call $\mu$ a \emph{Floquet exponent} of $M_N$, and write
  $\mu\in\Sigma_N$, if $\exp(\mu)$ is an eigenvalue of $M_N$.
\end{definition}
Floquet exponents of $M_N$ can be found as those complex numbers $\mu$
for which the periodic boundary value problem
\begin{align}
  \label{eq:perbvp}
  \dot y(t)&=[A(t)-\mu I ]y(t)+\exp(-(N+\tau)\mu)\,B(t)\,
  y((t-\tau)_{\mathrm{mod}[-1,0]})
  \\
  y(0)&=y(-1)\label{eq:perbcond}
\end{align}
has a nontrivial solution $y \in C^1([-1,0], \mathbb{C}^n)$ \cite{HL93}.   
Note that in \eqref{eq:perbvp} we use the delay $\tau\in[0,1)$ to calculate the 
Floquet exponents of $M_N$, i.e. for a periodic orbit of system (\ref{eq:lindde}) with delay $\tau +N$.
Only the factor $\exp(-N\mu)$ in front of $B(t)$ accounts for the large delay
whereas we have just extended $y$ periodically for arguments
less than $-1$.  If \eqref{eq:perbvp} has a non-trivial solution
$y(t)$ for $\mu\in\mathbb{C}$ then it has the non-trivial solution
$y(t)\exp(2\pi \i kt)$ for $\mu+2k\pi \i$ for any integer $k$. Hence, we choose
the Floquet exponent $\mu$ such that its imaginary part is between
$[-\pi,\pi)$.

\subsection{Asymptotic spectra for $N\to\infty$}
The set of Floquet exponents, $\Sigma_N$, forms a discrete subset of
the complex plane, the point spectrum of exponents, which depends on
$N$. In order to describe in which form $\Sigma_N$ has a limit for
$N\to\infty$ we introduce two asymptotic spectra, which are also
subsets of the complex plane. The notation follows \cite{YP09}.

\begin{definition}[Instantaneous and strongly unstable spectrum]
$\mbox{}$
  \label{def:inst:strong}
  The set $\Sigma_A$ of all $\mu\in\mathbb{C}$ for which the linear ODE
  boundary value problem on $[-1,0]$
  \begin{align}
    \label{eq:str:ode}
    \dot y&=[A(t)-\mu I ]y\\
    \label{eq:str:bcond}
    y(0)&= y(-1)
  \end{align}
  has a non-trivial solution $y\in C^1([-1,0];\mathbb{C}^n)$ is called the
  \textbf{instantaneous spectrum}. The subset
  $\mathcal{A}_+\subseteq\Sigma_A$ of those $\mu$ with positive real part
  is called the \textbf{strongly unstable asymptotic spectrum}.
\end{definition}
The instantaneous spectrum $\Sigma_A$ contains exactly $n$ elements
with imaginary part in $[-\pi,\pi)$, counting algebraic multiplicity.
We note that $\Sigma_A$ and $\mathcal{A}_+$ do not depend on $N$ but only
on $A$.  One result of our paper is that all Floquet exponents of
$M_N$ with a real part that is positive uniformly in $N$ converge to
elements of the strongly unstable spectrum $\mathcal{A}_+$.

Yanchuk \& Perlikowski \cite{YP09} observed that the presence of
strongly unstable spectrum is not the only possible cause of
instability for large $N$. They observed that large numbers of Floquet
exponents form bands that have a distance of order $1/N$ from the
imaginary axis and have a spacing of order $1/N$ along the imaginary
axis. In the limit $N\to\infty$ these bands form curves after a rescaling of the
real part by $N$. The limiting curves, called \emph{asymptotic
  continuous spectrum} in \cite{YP09}, were defined with the help of a
parametric periodic boundary value problem:
\begin{definition}[Asymptotic continuous spectrum]
  The complex number $\gamma + i \omega \in\mathbb{C}$ ($\gamma\in\mathbb{R}$, $\omega
  \in [-\pi, \pi)$) lies in the \textbf{asymptotic continuous
    spectrum}, $\mathcal{A}_c$, if the boundary value problem on $[-1,0]$
  \begin{align}
    \label{eq:pc:ode}
    \dot y(t)&=[A(t)-i\omega I ]y(t)+\exp(-\gamma-i \varphi)B(t)
    y((t-\tau)_{\mathrm{mod}[-1,0]})
    \\
    y(0)&=y(-1)\label{eq:pc:bcond}
  \end{align}
  has a non-trivial solution $y\in C^1([-1,0]\;R^n)$ for some
  $\varphi\in\mathbb{R}$. The quantity $\varphi$ is called the \textbf{phase}
  corresponding to $\gamma+i\omega$.
\end{definition}
Again, the asymptotic continuous spectrum does not depend on $N$ but
only on $A$, $B$ and $\tau$.

\subsection{A characteristic function for Floquet exponents}
We will reduce now the study of the various
spectra and their relations to each other to a root-finding problem of
a holomorphic function. The following lemma states the existence of a characteristic function for Floquet exponents
that at the same time can be used to describe the asymptotic spectra $\mathcal{A}_+$ and $\mathcal{A}_c$.
\begin{lemma}[Characteristic function]
  \label{thm:charinformal}
  There exists a function
  $h:\Omega_1\times\Omega_2\subseteq\mathbb{C}\times\mathbb{C}\mapsto\mathbb{C}$ which is
  holomorphic in both arguments with the following properties:
  \begin{enumerate}
  \item\label{thm:MN} $\mu$ is a Floquet exponent of $M_N$
    , i.e. $\mu\in\Sigma_N$, if and only if
    \begin{equation}\label{eq:inf:sigmaN}
      h(\mu,\exp(-(N+\tau)\mu)=0\mbox{,}
    \end{equation}
  \item\label{thm:sigmas} $\mu$ is in the instantaneous spectrum
    $\Sigma_A$ if and only if
    \begin{equation}\label{eq:inf:sigmaA}
      h(\mu,0)=0\mbox{,}
    \end{equation}
    and, hence, $\mu$ is in the strongly unstable asymptotic spectrum
    $\mathcal{A}_+$ if
    \begin{displaymath}
      h(\mu,0)=0\mbox{\quad and\quad}
      \operatorname{Re}\mu>0\mbox{,}
    \end{displaymath}
  \item\label{thm:acs} $\mu=\gamma+i\omega$ is in the asymptotic
    continuous spectrum $\mathcal{A}_c$ if, for some phase $\varphi\in\mathbb{R}$,
    \begin{displaymath}
      h(i\omega,\exp(-\gamma-i\varphi))=0\mbox{.}
    \end{displaymath}
  \end{enumerate}
  The algebraic multiplicity of $\mu$ as a Floquet exponent in the
  statements \ref{thm:MN} and \ref{thm:sigmas} equals its multiplicity
  as a root in \eqref{eq:inf:sigmaN} and \eqref{eq:inf:sigmaA}.
\end{lemma}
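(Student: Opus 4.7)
\quad
All three boundary value problems in the statement --- the Floquet BVP \eqref{eq:perbvp}--\eqref{eq:perbcond}, the instantaneous BVP \eqref{eq:str:ode}--\eqref{eq:str:bcond}, and the asymptotic continuous BVP \eqref{eq:pc:ode}--\eqref{eq:pc:bcond} --- are special cases of the single parametric periodic BVP
\begin{equation*}
\dot y(t)=[A(t)-\mu I]\,y(t)+z\,B(t)\,y\bigl((t-\tau)_{\mathrm{mod}[-1,0]}\bigr),\qquad y(0)=y(-1),
\end{equation*}
obtained by the substitutions $z=\exp(-(N+\tau)\mu)$, $z=0$, and $(\mu,z)=(i\omega,\exp(-\gamma-i\varphi))$, respectively. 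The plan is to construct a single function $h(\mu,z)$, holomorphic on an open domain $\Omega_1\times\Omega_2\subseteq\mathbb{C}^2$, whose zero set coincides --- with correct multiplicity --- with the set of $(\mu,z)$ for which this parametric BVP admits a non-trivial solution.

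I would construct $h$ as the determinant of a finite linear algebraic system obtained by a partition-and-variation-of-constants reduction. Let $\Phi_\mu(t,s)$ denote the fundamental matrix of $\dot y=[A(t)-\mu I]\,y$, which is entire in $\mu$ and smooth in $t,s$. Partition $[-1,0]$ at the breakpoint $t=-1+\tau$, where the modular shift $(t-\tau)_{\mathrm{mod}[-1,0]}$ changes branch (and at the auxiliary point $t=-\tau$ if needed). On each resulting subinterval the parametric BVP becomes an ODE whose inhomogeneity is $z\,B(t)$ multiplied by a shifted restriction of $y$ to another subinterval. Parameterising $y$ by the finite-dimensional boundary data $v_0:=y(-1)$, $v_1:=y(-\tau)$ and applying variation of constants on each subinterval, one obtains $y$ as an affine function of $(v_0,v_1)$ whose coefficient matrices are holomorphic in $\mu$ and polynomial in $z$ of bounded degree; because the modular shift on $[-1,0]$ is applied only once, the recursion closes after a single step and no infinite-dimensional inversion is required. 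The continuity condition at the internal breakpoint and the periodicity $y(0)=y(-1)$ then collapse into a $2n\times 2n$ linear system $G(\mu,z)\,(v_0,v_1)^\top=0$, and I set
\begin{equation*}
h(\mu,z):=\det G(\mu,z),
\end{equation*}
which is holomorphic throughout $\mathbb{C}^2$ by construction.

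Properties \ref{thm:MN}--\ref{thm:acs} then follow by direct substitution: each specialised BVP admits a non-trivial solution if and only if the corresponding $(v_0,v_1)$ lies in $\ker G(\mu,z)$, i.e.\ if and only if $h(\mu,z)=0$. For the multiplicity assertions in \ref{thm:MN} and \ref{thm:sigmas}, I would identify the generalised kernel of $G(\mu,z(\mu))$ along the holomorphic curves $z(\mu)=\exp(-(N+\tau)\mu)$, respectively $z\equiv 0$, with the generalised eigenspaces of $M_N-\exp(\mu)I$, respectively of the instantaneous monodromy $\Phi_\mu(0,-1)-I$. This identification is obtained by differentiating the parametric BVP with respect to $\mu$ to match Jordan chains, and is a routine adaptation of the standard correspondence between characteristic functions and spectral data for linear DDEs \cite{HL93}.

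The main obstacle I expect is the bookkeeping: the partition structure, the explicit form of the coupling between subintervals, and hence $G(\mu,z)$, all depend on $\tau$ in a piecewise manner, and one must assemble the pieces uniformly. In addition, the transcendental substitution $z=\exp(-(N+\tau)\mu)$ in property \ref{thm:MN} requires a careful chain-rule argument to match algebraic multiplicities between $h$ as a two-variable function and its restriction to this curve. By contrast, holomorphy of $h$ itself is automatic in this approach, precisely because the finite partition avoids any compact-operator inversion (and hence any meromorphic resolvent) that would arise in a naive fixed-point formulation based on the operator $I-zK$.
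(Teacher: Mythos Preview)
Your reduction to a single parametric BVP in $(\mu,z)$ is exactly right, and matches the paper's starting point. The gap is in the step where you claim that ``the recursion closes after a single step'' and that the coefficient matrices are ``polynomial in $z$ of bounded degree''. This is false for irrational $\tau$. On each subinterval the variation-of-constants formula expresses $y$ there in terms of the initial value \emph{and an integral of $zB(s)\,y(\text{shift})$}, where the shifted argument lies on another subinterval --- whose solution in turn involves an integral over yet another shifted piece of $y$, and so on. Because the rotation $t\mapsto (t-\tau)_{\mathrm{mod}[-1,0]}$ has infinite order on any finite partition when $\tau$ is irrational, this substitution never terminates: iterating it is exactly the Picard/Neumann series $\sum_{m\ge0}(zL)^m$, which is a genuine power series in $z$, not a polynomial. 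Equivalently, after your partition the system \eqref{eq:zbvp} is still a coupled functional equation with advanced and retarded arguments, not a finite ODE system; the paper makes this point explicitly (it is an ODE system only when $\tau$ is rational, cf.~\cite{SW06}). So the very ``compact-operator inversion'' you say you avoid is in fact still present.

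The paper's construction accepts this and controls it instead of eliminating it. It partitions $[-1,0]$ into $k$ \emph{equal} pieces with $k$ large (chosen depending on an a-priori bound $R$), writes the same fixed-point equation $y=S(\mu)v+zL_k(\mu)y$ on the space $C_k$ of piecewise continuous functions, and shows $\|L_k(\mu)\|_\infty=O(1/k)$ uniformly for $\operatorname{Re}\mu\ge -R$. Then $I-zL_k(\mu)$ is invertible by Neumann series for $|z|\le\exp(R)$, and $h(\mu,z)=\det\Delta(\mu,z)$ is obtained from the $nk$ matching/periodicity conditions on the unique fixed point. A consequence you should note: $h$ is holomorphic only on the half-plane/disc $\{\operatorname{Re}\mu>-R\}\times\{|z|<\exp(R)\}$, not on all of $\mathbb{C}^2$ as you assert; the paper states this limitation right after Lemma~\ref{thm:charinformal}. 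Your multiplicity argument via Jordan chains is in the right spirit; the paper likewise defers this to the characteristic-matrix machinery of \cite{KL92,SS11}.
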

From property~\ref{thm:acs} the motivation behind the name asymptotic
\emph{continuous} spectrum becomes clear: if we have a value
$\gamma_0+i\omega_0\in\mathcal{A}_c$ and the corresponding phase
$\varphi_0$, and $\partial_2h(i\omega_0,\exp(-\gamma_0-i\varphi_0))$ is
non-zero (which is generically the case) then a whole curve
$\gamma(\omega)+i\varphi(\omega)$ satisfies
$h(i\omega,\exp(-\gamma(\omega)-i\varphi(\omega)))=0$ for
$\omega\approx\omega_0$. These curves are the bands of the asymptotic
continuous spectrum. Note that from the existence of the trivial Floquet exponent $\mu=0$ we can conclude that $h(0,1)=0$, which in turn implies that $\gamma=\omega=0$ with phase $\varphi=0$ is in $\mathcal{A}_c$. In the generic case where the trivial exponent is contained in a single curve we call it the {\em critical branch} of $\mathcal{A}_c$.

The details of the construction of $h$, which modifies the general
characteristic matrices and functions for periodic delay equations
from \cite{SS11,SSH06}, will be given in section~\ref{sec:charmat}.
The general characteristic function constructed by \cite{SS11,SSH06}
may have poles in the complex plane.  The modification in
Section~\ref{sec:charmat} ensures that these poles of $h(\mu,z)$
stay in the left half-plane.  Hence, the domain
$\Omega_1\times\Omega_2$ of $h$ contains all $\mu$ and $z$ satisfying
$\operatorname{Re}\mu>-R$ and $|z|<\exp(R)$ where $R>0$ is arbitrary
but has to be chosen a-priori. Accordingly, statements
\ref{thm:MN}--\ref{thm:acs} of Lemma~\ref{thm:charinformal} are valid
only if both arguments of $h$ satisfy their respective
restriction. However, this is the case in 
the region of
interest for stability and bifurcations.

The introduction of the characteristic function $h$ clarifies how the
different spectra can be calculated and reduces the analysis of the
spectra to a root-finding problem. After defining $h$ properly one could
even use $h$ to \emph{define} the corresponding spectra by the
properties listed in Lemma~\ref{thm:charinformal}.

\subsection{Main results}
With the help of the 
asymptotic spectra $\mathcal{A}_+$ and $\mathcal{A}_c$ we can formulate now a sharp criterion for the exponential
orbital stability and instability of $M_N$, which is our main result:
\begin{theorem}[Stability/Instability]\label{thm:main}
  The map $M_N$ (and, hence, the periodic orbit $x_*$) is
  \textbf{exponentially orbitally stable} for all sufficiently large
  $N$ if all of the following conditions hold:
  {\renewcommand{\labelenumi}{S-\arabic{enumi}}
    \renewcommand{\theenumi}{S-\arabic{enumi}}
    \begin{enumerate}
    \item \label{thm:instnc} \textbf{(No strong instability)} all elements
      of the instantaneous spectrum $\Sigma_A$ have negative real part
      (this implies in particular that the strongly unstable spectrum
      $\mathcal{A}_+$ is empty), and
    \item \label{thm:zeroreg} \textbf{(Non-degeneracy)}
      $\partial_2h(0,1)\neq0$, that is, the Floquet exponent $0$ is
      simple for sufficiently large $N$, and
    \item \label{thm:acsneg} \textbf{(Weak stability)} except for the
      point $\mu=0$ with phase $\varphi=0$ the asymptotic continuous
      spectrum $\mathcal{A}_c$ is contained in $\{z\in\mathbb{C}:\operatorname{Re} z<0\}$.
    \end{enumerate}
  }
  The map $M_N$ is \textbf{exponentially unstable} for all
  sufficiently large $N$ if one of the following conditions holds
  {\renewcommand{\labelenumi}{U-\arabic{enumi}}
    \renewcommand{\theenumi}{U-\arabic{enumi}}
    \begin{enumerate}
    \item \label{thm:strongunst} \textbf{(Strong instability)} the
      strongly unstable spectrum is non-empty, or
    \item \label{thm:acspos} \textbf{(Weak instability)} a non-empty
      subset of the asymptotic continuous spectrum $\mathcal{A}_c$ has
      positive real part.
    \end{enumerate}
  }
\end{theorem}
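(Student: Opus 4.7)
The plan is to reduce the stability/instability question to the location of roots of the characteristic function $h(\mu, e^{-(N+\tau)\mu})$ from Lemma~\ref{thm:charinformal}, by splitting the study of $\Sigma_N$ into two asymptotic regimes. First, for Floquet exponents with $\operatorname{Re}\mu$ bounded below by a fixed $\delta>0$, $|e^{-(N+\tau)\mu}|$ decays exponentially in $N$, so the defining equation collapses in the limit to $h(\mu,0)=0$, which by property~\ref{thm:sigmas} of Lemma~\ref{thm:charinformal} characterizes the instantaneous spectrum $\Sigma_A$. Building on Section~\ref{sec:strong}, a normal-families/Hurwitz argument shows that every Floquet exponent of $M_N$ with $\operatorname{Re}\mu \geq \delta$ converges to an element of $\mathcal{A}_+$, while Rouché's theorem applied to $h(\mu, e^{-(N+\tau)\mu})$ on a small disk around any $\mu_0 \in \mathcal{A}_+$ produces a Floquet exponent within that disk for all large $N$. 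This directly yields (U-1), and under (S-1) empties the half-plane $\{\operatorname{Re}\mu \geq \delta\}$ of Floquet exponents for all sufficiently large $N$.

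In the second regime I rescale $\mu = \gamma/N + i\omega$ with $\omega \in [-\pi,\pi)$ and $\gamma$ bounded, so that the Floquet equation becomes $h(i\omega + O(1/N),\, e^{-\gamma - iN\omega - i\tau\omega + O(1/N)}) = 0$. As $N$ varies, the phase $\varphi := -N\omega - \tau\omega \bmod 2\pi$ sweeps out all of $[-\pi,\pi)$, and the limit equation $h(i\omega, e^{-\gamma - i\varphi}) = 0$ is precisely the defining equation of $\mathcal{A}_c$. Relying on Section~\ref{sec:pc}, I would then establish: (a) every Floquet exponent of $M_N$ in a strip $\{|\operatorname{Re}\mu| \leq C/N\}$ lies within $O(1/N)$ of a point $\gamma_0/N + i\omega_0$ with $\gamma_0 + i\omega_0 \in \mathcal{A}_c$; and (b) for every $\gamma_0 + i\omega_0 \in \mathcal{A}_c$ for which $\partial_2 h$ is non-zero at the corresponding phase, for every sufficiently large $N$ one can adjust $\omega$ by an amount of order $1/N$ to satisfy the phase condition $N\omega + \tau\omega + \varphi \equiv 0 \bmod 2\pi$ and then invoke the implicit function theorem applied to $h$ to produce a Floquet exponent within $O(1/N)$ of $\gamma_0/N + i\omega_0$. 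Applying (b) at a point with $\gamma_0 > 0$ delivers (U-2). The trivial Floquet exponent $\mu = 0$ corresponds to $h(0,1) = 0$, and under (S-2) the implicit function theorem at $(\mu, z) = (0,1)$ ensures that it remains algebraically simple for all large $N$.

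To conclude exponential stability under (S-1)--(S-3), I combine the two regime analyses: the strong regime contributes no exponents in $\{\operatorname{Re}\mu \geq \delta\}$, and the continuous regime shows that every Floquet exponent in the strip $\{|\operatorname{Re}\mu| < \delta\}$ other than the simple exponent $\mu = 0$ lies near a scaled point of $\mathcal{A}_c \setminus \{0\}$ and so has $\operatorname{Re}\mu < 0$ by (S-3). Consequently, $\lambda = 1$ is the only spectral point of $M_N$ of modulus at least one, and it is simple; by standard Floquet theory of compact operators on $C([-\tau-N,0];\mathbb{R}^n)$ this is equivalent to exponential orbital stability in the sense of Definition~\ref{def:stab}.

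The main obstacle will be obtaining uniform control in the continuous regime near the imaginary axis. Statement (a) has to be proved uniformly in $\omega \in [-\pi,\pi)$, which requires handling the $N$-dependence of $h(i\omega + O(1/N),\cdot)$ by exploiting the holomorphy of $h$ together with the compactness of the circle $\{|z|=e^{-\gamma}\}$ traced by the second argument. The critical branch of $\mathcal{A}_c$ passing through $\mu = 0$ needs additional care because it is tangent to the imaginary axis there: one must show that under (S-3) the branch is locally confined to $\{\operatorname{Re}z \leq 0\}$ on both sides of $\omega = 0$, and that (S-2) pins the nearby Floquet exponents to this branch at order $O(1/N)$ rather than allowing a curvature-induced drift into the right half-plane.
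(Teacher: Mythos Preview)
Your overall architecture mirrors the paper's: a split into a strongly unstable regime handled by Rouch\'e/Hurwitz (the paper's Lemma~\ref{thm:sus}) and a near-imaginary-axis regime handled by the rescaling $\mu=\gamma/(N+\tau)+i\omega$ (Section~\ref{sec:pc}), with (U-1), (U-2) coming from spectral approximation and (S-1)--(S-3) from a contradiction/compactness argument (Lemma~\ref{thm:stab}). However, the obstacle you flag at the end is precisely where the substance of the proof lies, and your sketch does not yet contain the idea that resolves it.

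The difficulty is that near $\omega=0$ the critical branch satisfies $\tilde\gamma(\omega)=-c\,\omega^2+O(\omega^4)$ under (S-3). If you only establish that the scaled real parts of nearby Floquet exponents obey $\gamma_N=\tilde\gamma(\omega_N)+O(1/N)$ --- which is what ``pinned at order $O(1/N)$'' naturally yields via the implicit function theorem --- then for $\omega_N$ of order $N^{-1/2}$ both $\tilde\gamma(\omega_N)$ and the error are $O(1/N)$, and you cannot conclude $\gamma_N<0$. The paper's key step (Lemma~\ref{thm:pccurves}, with the supporting Lemma~\ref{thm:multpert} in Appendix~\ref{app:sec:mult}) is that the perturbed root curve has the \emph{multiplicative} form
\[
\tilde\gamma_N(\omega)=\left[1+\frac{1}{N+\tau}\,\operatorname{Re} g\!\left(\omega,\tfrac{1}{N+\tau}\right)\right]\tilde\gamma(\omega),
\]
so that $\tilde\gamma_N(\omega)$ inherits the sign of $\tilde\gamma(\omega)$ for all $\omega$ near $0$ once $N$ is large. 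This structure is not a generic perturbation fact; it arises because in $h(i\omega+\epsilon\gamma,\exp(-\gamma-i\varphi))=0$ the perturbation $\epsilon\gamma$ of the first argument is itself proportional to the quantity $\gamma$ being solved for. Without this multiplicative control your argument leaves open the window $|\omega|\lesssim N^{-1/2}$ in which a curvature-induced drift into $\operatorname{Re}\mu>0$ cannot be excluded, and the stability conclusion does not follow.

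A secondary gap: your two regimes $\{\operatorname{Re}\mu\geq\delta\}$ and $\{|\operatorname{Re}\mu|\leq C/N\}$ do not cover the intermediate strip $C/N<\operatorname{Re}\mu<\delta$; letting $\delta\to0$ is not enough, since the threshold $N$ in the strong regime depends on $\delta$. The paper closes this with Lemma~\ref{thm:imag}, which uses a uniform bound on the operator $K(\mu)$ in \eqref{eq:yfixpoint} to show that every Floquet exponent outside fixed balls around $\mathcal{A}_+$ satisfies $\operatorname{Re}\mu\leq R_2/(N+\tau)$ for an $N$-independent constant $R_2$. Only then is the compactness argument on scaled exponents (your statement (a)) available uniformly.
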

The weak stability condition~\ref{thm:acsneg} is equivalent to stating
that for all $\omega\in[-\pi,\pi)$ the function $z\mapsto
h(i\omega,\exp(-z))$ has no roots with non-negative real part (with
the exception of $z=0$ for $\omega=0$). Similarly, the weak
instability condition~\ref{thm:acspos} is equivalent to stating that
$h(i\omega,\exp(-z))=0$ for some $\omega\in[-\pi,\pi)$ and some
$z\in\mathbb{C}$ with positive real part.

Several additional corollaries follow from our analysis:

\subsection*{Decay rate and dominant frequency}
If $M_N$ satisfies \ref{thm:instnc}--\ref{thm:acsneg} and is, thus,
exponentially stable for all sufficiently large $N$ then the decay
rate is at most (and generically) of order $O(N^{-3})$ and the
dominant relative frequency is of order $O(N^{-1})$. That is, the
dominant non-trivial Floquet exponents are a complex pair of the form
$[-CN^{-3}+O(N^{-4})]\pm \i [2\pi/(N+\tau)+O(N^{-2})]$. One branch of the asymptotic
continuous spectrum touches the imaginary axis in a (generically
quadratic) even-order tangency at $0$, and the dominant non-trivial
Floquet exponent lies on this branch and next to the tangency.

\subsection*{Robustness}
A map $M_N$ that satisfies \ref{thm:instnc}--\ref{thm:acsneg} and is,
thus, exponentially stable for sufficiently large $N$ according to
Theorem~\ref{thm:main} remains exponentially stable for all
sufficiently large $N$ under all perturbations to $A$, $B$ and $\tau$
of size less than some $\epsilon>0$. This $\epsilon$ does not depend
on $N$ because the quantities determining the exponential stability do
not depend on $N$ but only on $A$, $B$ and $\tau$. This means that
periodic orbits that are stable for large $N$ are uniformly robust
with respect to perturbations of the system despite their weak
attraction rate of order $N^{-3}$. 

This permits the conclusion about the coexistence of \emph{stable}
periodic orbits in the example in Figure~\ref{fig:illu}. The
asymptotic spectra of orbits in the small neighborhood $\mathcal{U}$
are small perturbations of the asymptotic spectrum shown in
Figure~\ref{fig:illu}(f). In particular the curve touching the
imaginary axis will also be curved to the \emph{left} for all periodic
orbits in $\mathcal{U}$.

\subsection*{Spectral approximation}
Let the instantaneous spectrum $\Sigma_A$ have a positive distance to
the imaginary axis. Then we can guarantee that certain points $z$ in
the complex plane are in the \emph{resolvent set} of $M_N$ (that is,
the periodic boundary value problem
\eqref{eq:perbvp}--\eqref{eq:perbcond} has only the trivial solution
for $\mu=z$):
\begin{itemize}
\item every point $z$ in the positive half-plane that is not in the
  strongly unstable spectrum $\mathcal{A}_+$ is in the resolvent set of
  $M_N$ for sufficiently large $N$.
\item All points of the form $\gamma/(N+\tau)+i\omega$ are in the
  resolvent set of $M_N$ for sufficiently large $N$ if the point
  $z=\gamma+i\omega$ is not in the asymptotic continuous spectrum
  $\mathcal{A}_c$.
\end{itemize}
These two statements about the resolvent set of $M_N$ imply that the
spectrum of $M_N$ must be close to $\mathcal{A}_+$ or (after rescaling)
close to $\mathcal{A}_c$. The other direction also holds if the
instantaneous spectrum $\Sigma_A$ is not on the imaginary axis:
\begin{itemize}
\item if $\mu\in\mathcal{A}_+$ has multiplicity $k$ then $k$ Floquet
exponents of $M_N$ converge to $\mu$ for $N\to\infty$ (counting
multiplicity).
\item Let $\gamma_*+i\omega_*$ be in the asymptotic continuous
  spectrum $\mathcal{A}_c$, that is,
  $$h(i\omega_*,\exp(-\gamma_*-i\varphi_*))=0$$ for some phase
  $\varphi_*\in[-\pi,\pi)$. Then we will find Floquet exponents $\mu_N$
  of $M_N$ that satisfy (note that this is small-$o$)
  \begin{equation}\label{eq:genapprox}
    \operatorname{Re}\mu_N-\frac{\gamma_*}{N+\tau}=o(N^{-1})\mbox{,\quad}
    \operatorname{Im}\mu_N-\omega_*=o(1)\mbox{.}
  \end{equation}
  Estimate \eqref{eq:genapprox} is rather weak. We need and prove a
  stronger and more detailed estimate under the additional condition
  that $\partial_2h(i\omega_*,\exp(-\gamma_*-i\varphi_*))\neq0$. Then we
  have a local root curve $\tilde\gamma(\omega)+i\tilde\varphi(\omega)$
  satisfying
  $h(i\omega,\exp(-\tilde\gamma(\omega)-i\tilde\varphi(\omega)))=0$ for
  $\omega$ near $\omega_*$, and for sufficiently large $N$ we find
  algebraically simple Floquet exponents $\mu_k$ of $M_N$ satisfying
  \begin{align}
    \label{eq:omega1}
    \operatorname{Im}\mu_k&=\frac{2k\pi}{N+\tau}+\frac{1}{N+\tau}
    \tilde\varphi\left(\frac{2k\pi}{N+\tau}\right)+O\left((N+\tau)^{-2}\right)\\
    \label{eq:gamma1}
    \operatorname{Re}\mu_k&=\left[1+O\left((N+\tau)^{-1}\right)\right]
    \frac{\tilde\gamma(\operatorname{Im}\mu_k)}{N+\tau}
    \mbox{,}
  \end{align}
  where $k$ are integers such that $2k\pi/(N+\tau)$ is near $\omega_*$, and
  the $O$-s are smooth functions of $\omega$. These Floquet exponents
  $\mu_k$ form a band of discrete complex numbers approximating the
  curve $\tilde\gamma(\omega)+i\tilde\varphi(\omega)$ of asymptotic
  continuous spectrum.
\end{itemize}

\subsection*{Degeneracies}
Theorem~\ref{thm:main} is sharp except for several degenerate cases
that are excluded by the conditions \ref{thm:instnc}--\ref{thm:acsneg}
and \ref{thm:strongunst}--\ref{thm:acspos}.  Degeneracies limiting the region of stable periodic orbits are:
\begin{itemize}
\item An element of the instantaneous spectrum $\Sigma_A$ has zero
  real part. 
\item The partial derivative $\partial_2h(0,1)$ equals $0$. In this
  case the periodic orbit cannot be exponentially stable.
\item \textbf{(Turing and long wavelength instability)} The
  asymptotically continuous spectrum $\mathcal{A}_c$ touches the
  imaginary axis in a quadratic tangency at some value $\pm
  i\omega_0$. In this case the orbit is not exponentially unstable and
  it is still possible that the periodic orbit is exponentially stable
  for all $N$ (this is different from the stationary case discussed in
  \cite{LWY11}) but it may also be weakly stable or unstable. A
  special case is that $\mathcal{A}_c$ touches the imaginary axis in a
  quadratic tangency in the point $\mu=0$ with phase $\varphi=\pi$. In
  this case the periodic orbit is still exponentially stable for large
  $N$.
\item\textbf{(Modulational instability)} The critical branch of the asymptotic continuous
  spectrum $\mathcal{A}_c$, containing the trivial exponent $\mu=0$, has a tangency with the 
  imaginary axis at $\omega=0$
  that is of higher order than quadratic. In this case the periodic
  orbit is still exponentially stable according to
  Theorem~\ref{thm:main}.
\end{itemize}
The characteristic function $h$ constructed in
Section~\ref{sec:charmat} is still valid for the degenerate cases.
However, for a detailed discussion of these degeneracies, one needs
not only to specify a defining equation for each degeneracy (this is
straightforward) but one also has to state secondary non-degeneracy
conditions.  In particular the case of instantaneous spectrum with
zero real part is somewhat subtle, even though it looks similar to the
other listed degeneracies of co-dimension 1 at first sight. In the
analogous situation for the spectrum of equilibria, it turns out that
instantaneous spectrum $i\omega_0$ with zero real part generically
implies a singularity of the asymptotic continuous spectrum,
$\gamma(\omega_0)=\infty$, and hence is not part of the stability
boundary.  Due to these reasons, we believe that a comprehensive
treatment of the degeneracies is beyond the scope of the present
paper.

\section{Construction of the characteristic matrix and function}
\label{sec:charmat}

Let us choose a constant $R>0$ arbitrarily large.  We construct a
function $h_N(\mu)$ that is analytic for all $\mu$ satisfying
$\operatorname{Re}\mu\geq -R/(N+\tau)$, and that satisfies
$h_N(\mu)=0$ if and only if $\mu$ is a Floquet exponent of the map
$M_N$. Hence, this function can be used to find all Floquet exponents
$\mu$ of $M_N$ that satisfy $\operatorname{Re}\mu \geq -R/(N+\tau)$.  Since
we are interested in the stability of the origin under iterates of
$M_N$, finding the roots of $h_N$ will then be sufficient.

We introduce the complex variable $z \in \mathbb{C}$ and consider the
periodic boundary value problem for $t \in [-1,0]$
\begin{align}
  \label{eq:zbvp}
  \dot y(t)&=[A(t)-\mu I ]y(t)+z\,B(t)\,
  y((t-\tau)_{\mathrm{mod}[-1,0]}),
  \\
  y(0)&=y(-1). \label{eq:zbc}
\end{align}
In a first step we will construct a characteristic matrix
$\Delta(\mu,z)$ for \eqref{eq:zbvp}--\eqref{eq:zbc} such that the
roots of its determinant $h(\mu,z)=\det \Delta(\mu,z)$ will be
precisely those pairs of points $(\mu, z)$ in some subdomain of
$\mathbb{C} \times \mathbb{C}$ for which
\eqref{eq:zbvp}--\eqref{eq:zbc} has a nontrivial continuously
differentiable solution $y \in C^1([-1,0];\mathbb{C}^n)$.  Thus, by
inserting $z=\exp(-(N+\tau) \mu)$ we will then obtain the
characteristic function $h_N(\mu)$ in a subdomain of $\mathbb{C}$.

Consider a
partition of the periodicity interval $[-1,0]$ into $k$ intervals of size $1/k$:
\begin{equation}\label{eq:jidef}
  I_j=\left[t_j,t_{j+1}\right)=
  \left[-1+\frac{j}{k},-1+\frac{j+1}{k}\right)\mbox{\ for $j=0,\ldots,k-1$.}  
\end{equation}
Using this partition we formulate a multiple initial value
problem (MIVP) for a vector of $k$ initial (or restart) values
$(v_0,\ldots,v_{k-1})^T\in\mathbb{C}^{nk}$ (similar to \emph{multiple
  shooting}):
\begin{align}
  \label{eq:ivpext}
  \dot y(t)&=[A(t)-\mu I ]y(t)+
  z B(t)y((t-\tau)_{\mathrm{mod}[-1,0]})\\
  \label{eq:icext}
  y(t_i)&=v_i\quad\mbox{for $i=0,\ldots,k-1$}
\end{align}
where $t\in[-1,0]$ and $z\in\mathbb{C}$. Notice that in \eqref{eq:ivpext} the
solution $y$ on the interval $[t_j,t_{j+1})$ depends on the solution
$y(t)$ in other intervals due to the term $y(t-\tau)_{\mathrm{mod}[-1,0]}$ in
the right-hand side of \eqref{eq:ivpext}. 

The main purpose of this partition is to reduce the length of the
integration interval at the cost of increasing the dimension of the
system.  Indeed, this construction is very similar to the construction
in \cite{SW06}, where in the case of rational $\tau$ a reduction to an
equivalent system of ODEs could be achieved, and for the case of
irrational $\tau$ corresponding rational approximations have been
considered.  This can be seen by introducing $u_j(t):=y(t+t_j)$, which
satisfy the system of equations
\begin{align}
\label{eq:uj}
\dot u_j(t)&=[A(t+t_j)-\mu I ]u_j(t)+
z B(t+t_j)u_{m(j)}((t + t_j - t_{m(j)} - \tau)_{\mathrm{mod}[-1,0]})\\
\label{eq:uj0}
u_j(0)&=v_j
\end{align}
with $j=0,\ldots,k-1$.  This system can now be considered as an
initial value problem on the interval $[0,1/k)$, and for a solution on
this smaller interval $[0,1/k)$ each component $u_j(t)$ represents the
solution $y(t+t_j)$ in the corresponding subinterval $I_j$. Note that
instead of the delayed term $u_j((t-\tau)_{\mathrm{mod}[-1,0]})$ we
have inserted a coupling to another component $u_{m(j)}((t + t_j -
t_{m(j)} - \tau)_{\mathrm{mod}[-1,0]})$, where the index $m(j)$ is
chosen such that the argument $(t_j - t_{m(j)} -
\tau)_{\mathrm{mod}[-1,0]}$ is in the interval $[0,1/k)$.  The initial
value problem for the coupled system \eqref{eq:uj}--\eqref{eq:uj0} is
an equivalent formulation of the multiple initial value problem
\eqref{eq:ivpext}--\eqref{eq:icext} and, in a similar way as the
boundary value problem \eqref{eq:zbvp}--\eqref{eq:zbc}, contains both
delayed and advanced arguments. 
\new{If $\tau$ is rational then \eqref{eq:uj}--\eqref{eq:uj0} is a
  system of ODEs. This was the starting point for the rational
  approximations in \cite{SW06}. We take another route by showing
  directly that the Picard-Lindel{\"o}f iteration for
  \eqref{eq:uj}--\eqref{eq:uj0} converges, similar to initial-value
  problems of ODEs.}
  In the sequel, we will clarify this point, using the
formulation \eqref{eq:ivpext}--\eqref{eq:icext} which is more
convenient for our purposes.

Let us denote by $U(t,s,\mu)\in\mathbb{R}^{n\times n}$ the propagation matrix
of the linear ODE defining the instantaneous spectrum,
\eqref{eq:str:ode}. That is,
\begin{align*}
  U(t,s,\mu)v&=y(t) \mbox{\quad where}\\
  y(s)&=v\mbox{\quad and}\\
  \dot y(r)&=[A(r)-\mu I ]y(r)\mbox{\quad for all $r\in[s,t]$.}
\end{align*}
The norm of $U(t,s,\mu)$ can be estimated by
\begin{equation}\label{eq:uest}
  \|U(t,s,\mu)\|_\infty\leq 
  \exp\left(\left[\|A\|_\infty-
      \operatorname{Re}\mu\right](t-s)\right)\mbox{.}
\end{equation}
In 
\eqref{eq:uest} we have used the notation
$\|A\|_\infty=\max_{t\in[-1,0]}\|A(t)\|_\infty$. We will use the same
notation for $B$.  In order to clarify in which sense system
\eqref{eq:ivpext}--\eqref{eq:icext} is an initial value problem and
what it means for $y$ to be a solution of system
\eqref{eq:ivpext}--\eqref{eq:icext} we formulate an integral equation
which is equivalent to
\eqref{eq:ivpext}--\eqref{eq:icext}:
\begin{align}
  \label{eq:varpext}
  y(t)&=[S(\mu)v](t)+z\int\limits_{a_k(t)}^tU(t,s,\mu)
  B(s)y((s-\tau)_{\mathrm{mod}[-1,0]}){\mathrm{d}} s\\
  \label{eq:varpextv}
  [S(\mu)v](t)&=U(t,t_j,\mu)v_j\mbox{\quad if $t\in I_j$, }\\
  \label{eq:varpexta}
  a_k(t)&=t_j\mbox{\quad if $t\in I_j$.}
\end{align}
We note that $S(\mu)v$ and $a_k$ are piecewise continuous functions on
$[-1,0]$ ($[S(\mu)v](t)\in\mathbb{C}^n$ and $a_k(t)\in\mathbb{R}$). They are
continuous on each sub-interval $I_j=[t_j,t_{j+1})$ but have jumps at
the times $t_j$. The integral equation \eqref{eq:varpext} is a
fixed-point problem for $y$. If we find a fixed point $y$ then $y$ may
have discontinuities at the times $t_j$. Thus, the appropriate space
in which to look for solutions of the fixed point problem
\eqref{eq:varpext} is the space of \emph{piecewise continuous}
functions with the usual $\max$-norm $\|y\|_\infty$:
\begin{align}
  C_k=&\begin{aligned}[t] \{&y:[-1,0]\mapsto\mathbb{C}^n:\mbox{\ $y$
      continuous on each
      subinterval $I_j=[t_j,t_{j+1})$}\\
    & \mbox{\ ($j=0,\ldots,k-1$) and $\lim_{t\nearrow t_j} y(t)$
      exists for all $j=1\ldots k$.}\}
  \end{aligned}\label{eq:ckdef}
\end{align}
The right-hand side of the integral equation \eqref{eq:varpext} is an
affine map, mapping $C_k$ back to itself such that \eqref{eq:varpext}
is of the form
\begin{equation}
  \label{eq:varpextmap}
  y=S(\mu)v+zL_k(\mu)y
\end{equation}
where $S(\mu):\mathbb{C}^{nk}\mapsto C_k$ is defined by \eqref{eq:varpextv}, and
$L_k(\mu)y$ is the linear part of the right-hand side in
\eqref{eq:varpext} (that is, the integral term). The linear map
$L_k(\mu):C_k\mapsto C_k$ is continuously differentiable (and, thus,
holomorphic) with respect to the complex variable $\mu$. A simple
estimate for the norm of $L_k$ with respect to the
$\|\cdot\|_\infty$-norm gives us the unique solvability of the fixed
point problem \eqref{eq:varpextmap} (which is actually the integral
equation \eqref{eq:varpext}--\eqref{eq:varpexta}):
\begin{lemma}[Existence and Uniqueness of solutions for IVP]\
\\
  \label{thm:ivp}
  Let $R>0$ be arbitrary. If we set the number of sub-intervals, $k$,
  such that
  \begin{equation}\label{eq:kbound}
    k>C(R):=\max\left\{\|A\|_\infty+R,\|B\|_\infty\exp(1+R)\right\}
  \end{equation}
  then the affine integral equation
  \eqref{eq:varpext}--\eqref{eq:varpexta} has a unique solution $y\in
  C_k$ for all $\mu$ and $z$ satisfying
  \begin{equation}\label{eq:mubound}
    \operatorname{Re}\mu\geq-R\mbox{,\qquad}|z|\leq \exp(R)
  \end{equation}
  and all tuples $(v_0,\ldots,v_k)^T\in\mathbb{C}^{nk}$.
\end{lemma}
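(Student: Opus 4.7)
The plan is to recast \eqref{eq:varpext}--\eqref{eq:varpexta} in its operator form \eqref{eq:varpextmap} as a Banach fixed-point problem on $C_k$ equipped with the sup-norm, and to show that under the assumption \eqref{eq:kbound} the map $zL_k(\mu)$ is a strict contraction whenever $(\mu,z)$ satisfies \eqref{eq:mubound}. Existence, uniqueness, and a closed form $y=(I-zL_k(\mu))^{-1}S(\mu)v$ then follow from the Banach fixed-point theorem, or equivalently from convergence of the Neumann series $\sum_{j\geq 0}(zL_k(\mu))^j S(\mu)v$.

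First I would verify the routine fact that $L_k(\mu)$ maps $C_k$ into itself. The integrand $U(t,s,\mu)B(s)\,y((s-\tau)_{\mathrm{mod}[-1,0]})$ is bounded and, as a function of $s$, piecewise continuous with discontinuities only at grid points $t_j$; moreover for $t\in I_j$ the integration runs from $a_k(t)=t_j$ to $t$. Hence $L_k(\mu)y$ is continuous on each $I_j$, has a left limit at $t_{j+1}$, and vanishes at $t_j$, so $L_k(\mu)y\in C_k$. Linearity is clear, and the holomorphic dependence on $\mu$ follows from the corresponding property of $U(t,s,\mu)$.

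The substantive step is the norm estimate. For $t\in I_j$ and $s\in[t_j,t]$ we have $0\leq t-s\leq 1/k$, so \eqref{eq:uest} combined with $\operatorname{Re}\mu\geq-R$ gives $\|U(t,s,\mu)\|_\infty\leq\exp((\|A\|_\infty+R)/k)$. The bound $k>\|A\|_\infty+R$ in \eqref{eq:kbound} keeps the exponent below $1$, so $\|U(t,s,\mu)\|_\infty\leq e$. Using $\|B(s)\|_\infty\leq\|B\|_\infty$ and $\|y((\cdot)_{\mathrm{mod}[-1,0]})\|_\infty\leq\|y\|_\infty$, and integrating over an interval of length at most $1/k$, yields $\|L_k(\mu)\|_{C_k\to C_k}\leq e\|B\|_\infty/k$. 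Multiplying by $|z|\leq\exp(R)$ and invoking the second half of \eqref{eq:kbound} produces
\begin{equation*}
  |z|\,\|L_k(\mu)\|_{C_k\to C_k}\leq\frac{\|B\|_\infty\exp(1+R)}{k}<1,
\end{equation*}
which is the desired strict contraction.

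The main obstacle is precisely this contraction estimate: the partition into $k$ sub-intervals is introduced specifically so that the factor $1/k$ from the integration length can absorb both $|z|\leq\exp(R)$ and the worst-case growth $\exp((\|A\|_\infty+R)/k)$ of the ODE propagator. Once the right partition size is identified and the constants are tracked carefully, the Banach fixed-point theorem gives both existence and uniqueness of $y\in C_k$; the uniformity of the contraction bound over the domain \eqref{eq:mubound} moreover ensures that $y$ depends holomorphically on $(\mu,z)$, a property needed later when defining the characteristic function $h(\mu,z)$ and passing $z=\exp(-(N+\tau)\mu)$.
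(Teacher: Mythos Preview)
Your proposal is correct and follows essentially the same route as the paper's proof in Appendix~\ref{sec:ivpproof}: bound $\|L_k(\mu)\|_\infty$ by $e\|B\|_\infty/k$ using the propagator estimate \eqref{eq:uest} and the sub-interval length $1/k$, then invoke $|z|\leq\exp(R)$ and $k>\|B\|_\infty\exp(1+R)$ to make $zL_k(\mu)$ a strict contraction. Your estimate is in fact slightly more direct than the paper's, which splits into the cases $\operatorname{Re}\mu\gtrless\|A\|_\infty$ before arriving at the same bound.
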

The details of the norm estimate for $L_k$ are in
Appendix~\ref{sec:ivpproof}.  The solution $y$ can be written as
$y=[ I -zL_k(\mu)]^{-1}S(\mu)v$. When is this solution $y$ (which is
only in $C_k$ for general $v$) continuously differentiable on the
whole interval?  This requirement is a linear condition on the tuple
$v$.  Let us fix a constant $R>0$ and choose the number of
sub-intervals $k>C(R)$. 
\begin{definition}[Characteristic matrix and
  function] \label{def:gencharmfunc}\ \\ We define the
  \textbf{characteristic matrix} $\Delta(\mu,z)\in\mathbb{C}^{nk\times nk}$
  for the problem \eqref{eq:ivpext}-\eqref{eq:icext} as
\begin{equation}\label{eq:deltaedef}
  \Delta(\mu,z)v := \Delta(\mu,z)
  \begin{bmatrix}
    v_0\\ \vdots\\ v_{k-1}
  \end{bmatrix} :=
  \begin{bmatrix}
    v_{0}- y(0)\\
    v_1-\lim_{t\nearrow t_1} y(t)\\
    \vdots\\
    v_{k-1}-\lim_{t\nearrow t_{k-1}} y(t)
  \end{bmatrix}\mbox{.}
\end{equation}
and the corresponding \textbf{characteristic function} $h(\mu,z)$ as
\begin{equation}
  \label{eq:hzdef}
  h(\mu,z) := \det\Delta(\mu,z)\mbox{.}
\end{equation}

\end{definition}

Note that the integral equation \eqref{eq:varpext} implies that
$v_j=\lim_{t\searrow t_j} y(t)$. In this sense the values $v_j$ are
the initial (or restart) values for the differential equation
\eqref{eq:ivpext}. The construction of $\Delta$ gives a well-defined
matrix for all $z$ and $\mu$ satisfying $\operatorname{Re}\mu\geq-R$ and
$|z|\leq\exp(R)$: for a given tuple $v$ we evaluate the unique
solution $y=[ I -zL_k(\mu)]^{-1}S(\mu)v\in C_k$ of the integral
equation \eqref{eq:varpext} and then we use this $y$ to evaluate the
right-hand side of \eqref{eq:deltaedef}.  The characteristic matrix
$\Delta$ is set up such that for $v$ in the kernel of $\Delta(\mu,z)$
the solution $y$ also satisfies the differential equation
\eqref{eq:ivpext} on the whole interval (and does not have jumps)
including the periodic boundary conditions:
\begin{lemma}[Differentiability]
  \label{thm:diff}
  Let $\operatorname{Re}\mu\geq-R$, $|z|\leq\exp(R)$ and $k>C(R)$. If the tuple
  $v=(v_0,\ldots,v_{k-1})$ satisfies
  \begin{equation}
    \label{eq:charmateq}
    \Delta(\mu,z)v=0
  \end{equation} 
  then $y=[ I -zL_k(\mu)]^{-1}S(\mu)v$, the solution of the integral
  equation \eqref{eq:varpext}, is continuously differentiable on
  $[-1,0]$ and satisfies the differential equation \eqref{eq:zbvp}
  with the periodic boundary condition \eqref{eq:zbc}. Conversely, let
  $y \in C^1([-1,0], \mathbb{C}^n)$ be a continuously differentiable
  solution of \eqref{eq:zbvp}--\eqref{eq:zbc}. Then the tuple
  $v=(v_0,\ldots,v_{k-1}) \in \mathbb{C}^k$, where $v_{0} = y(-1)$, $v_1 =
  y(t_1)$, $\dots$, $v_{k-1} = y(t_{k-1})$ satisfies
  \eqref{eq:charmateq}.
\end{lemma}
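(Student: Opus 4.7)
The plan is to read off both implications directly from the structure of the integral equation \eqref{eq:varpext}--\eqref{eq:varpexta} and the definition \eqref{eq:deltaedef} of $\Delta$. The integral equation automatically gives the right-hand limits $\lim_{t\searrow t_j}y(t)=v_j$ for $j=0,\dots,k-1$ because the integral term vanishes as $t\searrow t_j$ and $[S(\mu)v](t_j)=v_j$. The matrix $\Delta(\mu,z)$, by contrast, encodes the left-hand limits: the $j$-th row enforces $v_j=\lim_{t\nearrow t_j}y(t)$ for $j=1,\dots,k-1$, while the $0$-th row enforces $v_0=y(0)=\lim_{t\nearrow 0}y(t)$. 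So the vanishing of $\Delta(\mu,z)v$ is exactly the statement that the right- and left-hand limits at each partition point agree together with the identification $y(0)=y(-1)$.

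For the forward direction I would argue as follows. Given $v$ with $\Delta(\mu,z)v=0$, apply Lemma~\ref{thm:ivp} to obtain a unique $y\in C_k$ solving \eqref{eq:varpext}. Matching right and left limits at each interior breakpoint $t_1,\dots,t_{k-1}$ eliminates the potential jumps in $C_k$, so $y\in C([-1,0];\mathbb{C}^n)$. Matching $v_0$ (the right limit at $t_0=-1$) with $y(0)$ (the left limit at $0$) yields the periodic boundary condition \eqref{eq:zbc}. Once $y$ is continuous on $[-1,0]$, the map $t\mapsto y((t-\tau)_{\mathrm{mod}[-1,0]})$ is piecewise continuous with the same set of possible discontinuities as $y$ (shifted by $\tau$), and hence is in fact continuous. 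The right-hand side of \eqref{eq:varpext} is then differentiable in $t$, so $y\in C^1([-1,0];\mathbb{C}^n)$, and differentiating \eqref{eq:varpext} using the variation-of-constants identity $\partial_t U(t,s,\mu)=[A(t)-\mu I]U(t,s,\mu)$ recovers the differential equation \eqref{eq:zbvp} on each $I_j$ and, by continuity of both sides, on all of $[-1,0]$.

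For the reverse direction I would start from a $C^1$ solution $y$ of \eqref{eq:zbvp}--\eqref{eq:zbc} and set $v_j:=y(t_j)$. Applying the variation-of-constants formula to \eqref{eq:zbvp} on each subinterval $I_j$ with initial value $y(t_j)=v_j$ yields exactly \eqref{eq:varpext}, so this $y$ coincides with the unique $C_k$-solution of the integral equation produced by $S(\mu)v$ via Lemma~\ref{thm:ivp}. Now I read off $\Delta(\mu,z)v$: continuity of $y$ on $[-1,0]$ gives $\lim_{t\nearrow t_j}y(t)=y(t_j)=v_j$ for $j=1,\dots,k-1$, and the periodic boundary condition \eqref{eq:zbc} together with $v_0=y(-1)$ gives $y(0)=y(-1)=v_0$. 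Both statements together are precisely $\Delta(\mu,z)v=0$.

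I do not expect any serious obstacle in either direction; the entire argument is essentially an unpacking of definitions once the hypotheses of Lemma~\ref{thm:ivp} are satisfied. The one point that needs care is the slightly asymmetric treatment of the endpoints: the integral equation naturally produces right-hand limits at the $t_j$, but the first row of $\Delta$ uses the left-hand limit at $t=0$ rather than a breakpoint of the partition, so one has to verify that the identification $t_k\simeq t_0$ via periodicity is what makes the construction symmetric and turns $\Delta(\mu,z)v=0$ into the full set of matching plus boundary conditions.
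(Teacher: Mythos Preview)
Your proposal is correct and follows essentially the same route as the paper: use the rows of $\Delta(\mu,z)v=0$ to match left- and right-hand limits at the breakpoints (continuity) and to enforce $y(0)=y(-1)$ (periodicity), then differentiate the integral equation to recover \eqref{eq:zbvp}; the converse is the straightforward variation-of-constants verification you describe. One small point to tighten: the potential discontinuity of $t\mapsto y((t-\tau)_{\mathrm{mod}[-1,0]})$ does not come from shifted jumps of $y$ but from the wraparound of the mod map at $t=\tau-1$, and it is precisely the periodic condition $y(0)=y(-1)$ (which you have already established) that removes it---the paper makes this explicit when it notes that the integrand in the concatenated formula is continuous at $s=\tau-1$.
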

\begin{proof}
  The rows $2$ to $k$ of the right-hand side in the definition
  \eqref{eq:deltaedef} of $\Delta$ ensure continuity of $y$: since
  $v_j=\lim_{t\searrow t_j} y(t)$ for $j=1,\ldots,k-1$ the condition
  that these rows are equal to zero reads $\lim_{t\searrow t_j}
  y(t)=\lim_{t\nearrow t_j} y(t)$, and (by inserting the right-hand
  side of \eqref{eq:varpext})
  \begin{displaymath}
    v_{j+1}=U(t_{j+1},t_j,\mu)v_i+
    z\int\limits_{t_j}^{t_{j+1}}U(t_{j+1},s,\mu)B(s)
    y((s-\tau)_{\mathrm{mod}[-1,0]}){\mathrm{d}} s
  \end{displaymath}
  for $j=0,\ldots,k-1$. This implies that $y$ is continuous on
  $[-1,0]$, and that we can concatenate all the integral terms in
  \eqref{eq:varpext} to
  \begin{equation}\label{eq:int01}
    y(t)=U(t,-1,\mu)v_0+z\int_{-1}^tU(t,s,\mu) B(s)
    y((s-\tau)_{\mathrm{mod}[-1,0]}){\mathrm{d}} s
  \end{equation}
  The first row of the condition $\Delta(\mu,z)v=0$ reads $v_0= y(0)$,
  which makes sure that $y$ is periodic at the boundary of $[-1,0]$,
  and guarantees that the integrand in \eqref{eq:int01} is continuous
  at $s=\tau-1$. Consequently, the integrand is continuous everywhere,
  which implies that $y$ is continuously differentiable. Thus, we can
  differentiate \eqref{eq:int01} with respect to $t$, which implies
  that $y$ satisfies the differential equation \eqref{eq:zbvp}. This,
  in turn, implies that also $\dot y(-1)=\dot y(0)$ because the
  right-hand side of \eqref{eq:zbvp} is periodic.
\end{proof}

Since the map $L_k$ depends analytically on $\mu$, the matrix $\Delta$
depends analytically on $\mu$ and $z$, too (as long as
$\operatorname{Re}\mu\geq-R$ and $|z|\leq \exp(R)$). Calling $\Delta$ the
characteristic matrix makes sense for the following reason:
\begin{lemma}[Characteristic function for \eqref{eq:lindde} \cite{SS11}]
  \label{thm:char} Let $N\geq1$ and $R>0$ be given, and choose the
  number of sub-intervals, $k$, greater than $C(R)$. Then for all
  $\mu$ satisfying
  \begin{displaymath}
    \operatorname{Re}\mu\geq-\frac{R}{N+\tau}
  \end{displaymath}
  the following equivalence holds: $\mu$ is a Floquet exponent of the
  time-$1$ map $M_N$ of the periodic linear DDE \eqref{eq:lindde} if
  and only if
  \begin{equation}\label{eq:hndef}
    h_N(\mu):=h(\mu,\exp(-(N+\tau)\mu)) 
    = \det\Delta(\mu,\exp(-(N+\tau)\mu))=0\mbox{.}
  \end{equation}
  The algebraic multiplicity of $\mu$ as a Floquet exponent of $M_N$ is
  equal to the order of $\mu$ as a root of $h_N$.
\end{lemma}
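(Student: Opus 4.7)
The plan is to establish Lemma~\ref{thm:char} by specializing the two-variable characteristic function $h(\mu, z) = \det \Delta(\mu, z)$ along the curve $z = \exp(-(N+\tau)\mu)$, and then invoking the characteristic-matrix framework of \cite{SS11,SSH06} for the multiplicity statement. First I would verify that this substitution keeps us inside the admissible domain of Lemma~\ref{thm:ivp}: if $\operatorname{Re}\mu \geq -R/(N+\tau)$, then $|z| = \exp(-(N+\tau)\operatorname{Re}\mu) \leq \exp(R)$, so the pair $(\mu, z)$ satisfies \eqref{eq:mubound}. Consequently $\Delta(\mu, \exp(-(N+\tau)\mu))$ is well-defined and depends analytically on $\mu$ throughout the strip, and after the substitution the parametric BVP \eqref{eq:zbvp}--\eqref{eq:zbc} coincides termwise with the Floquet BVP \eqref{eq:perbvp}--\eqref{eq:perbcond}.

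For the existence-of-root direction I would then apply Lemma~\ref{thm:diff} directly: $h_N(\mu) = 0$ if and only if the matrix $\Delta(\mu, \exp(-(N+\tau)\mu))$ has nontrivial kernel, and any nonzero kernel element $v$ yields a nontrivial $C^1$-solution $y = [I - z L_k(\mu)]^{-1} S(\mu) v$ of the Floquet BVP, while any such solution evaluated at the breakpoints $t_0, t_1, \dots, t_{k-1}$ gives back a nonzero kernel element. By the textbook identification (cf.\ \cite{HL93}) of Floquet exponents of $M_N$ with those $\mu$ for which the Floquet BVP admits a nontrivial $C^1$-solution, it follows that $\mu \in \Sigma_N$ precisely when $h_N(\mu) = 0$.

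For the multiplicity claim I would invoke the general characteristic-matrix theory of \cite{SS11,SSH06}. The aim is to exhibit entire matrix-valued functions $E(\mu)$ and $F(\mu)$, analytic and pointwise invertible on the strip $\operatorname{Re}\mu \geq -R/(N+\tau)$, such that $\Delta(\mu, \exp(-(N+\tau)\mu))$ is equivalent (in the Kaashoek--van Gils sense) to the finite-dimensional reduction of the operator pencil $\exp(\mu) I - M_N$. The concrete recipe glues the variation-of-constants propagators on the subintervals $I_j$ and identifies $\Delta$ with the Schur complement of the infinite-dimensional eigenvalue problem $M_N w = \exp(\mu) w$; the equivalence then transfers Jordan structure, so that the order of the zero of $h_N$ at $\mu$ coincides with the algebraic multiplicity of $\exp(\mu)$ as an eigenvalue of $M_N$, which by definition equals the algebraic multiplicity of $\mu$ as a Floquet exponent.

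The main obstacle is precisely this multiplicity statement: the root-existence direction is a near-tautological consequence of Lemma~\ref{thm:diff}, but equating algebraic multiplicities requires constructing the equivalence and checking that the factors $E(\mu)$ and $F(\mu)$ remain analytic and invertible on the entire strip $\operatorname{Re}\mu \geq -R/(N+\tau)$. The crucial estimate is the one already used for $L_k$ in Lemma~\ref{thm:ivp}: the choice $k > C(R)$ makes the Neumann series for $[I - z L_k(\mu)]^{-1}$ converge uniformly on the strip, so $[I - z L_k(\mu)]^{-1}$ is analytic there, and the equivalence can be set up without introducing spurious poles that would distort the order of the zero.
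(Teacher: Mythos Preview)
Your proposal is correct and follows essentially the same approach as the paper: the equivalence direction is obtained directly from Lemma~\ref{thm:diff} (after checking that $\operatorname{Re}\mu\geq -R/(N+\tau)$ forces $(\mu,z)$ into the admissible domain \eqref{eq:mubound}), and the multiplicity statement is deferred to the characteristic-matrix equivalence of \cite{SS11} (in the spirit of \cite{KL92}). Your write-up is in fact more explicit than the paper's own proof, which simply cites \cite{SS11} for the multiplicity claim without spelling out the role of the uniform invertibility of $I-zL_k(\mu)$ on the strip.
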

\begin{proof}
  That $\mu$ is a Floquet exponent of \eqref{eq:lindde} if and only if
  $\Delta(\mu,\exp(-(N+\tau)\mu))$ has a non-trivial kernel is clear
  because Floquet exponents of $M_N$ were defined as those complex
  numbers for which \eqref{eq:zbvp}--\eqref{eq:zbc} has a non-trivial
  solution. The statement about the multiplicity of $\mu$ follows from
  arguments similar to \cite{KL92}, which are laid out in detail in
  \cite{SS11} (an equivalence between the eigenvalue problem for
  $M_N$, $\exp(\mu)v-M_Nv=0$, and the algebraic equation
  $\Delta(\mu,\exp(-(N+\tau)\mu))v=0$ is constructed in \cite{SS11}).
\end{proof}
Both functions, $h$ and $h_N$, are real analytic (that is, their
expansion coefficients are real numbers since $A$, $B$ and $\tau$ are
real). 

For any given $R > 0$ we have chosen $k > 0$ and constructed a
characteristic function $h_N(\mu)$, defined on the half plane 
\begin{displaymath}
  \left
    \{ \mu \in \mathbb{C} \mid \operatorname{Re}\mu\geq-\frac{R}{N+\tau} \right \}\mbox{,}
\end{displaymath}
the roots of which are precisely the Floquet exponents (counting
multiplicities) of the period map $M_N$ for the linearized delay
differential equation \eqref{eq:lindde}.

As we are interested in the location of Floquet exponents of the
time-$1$ map $M_N$ to the right or close to the imaginary axis we have
reduced the eigenvalue problem to a study of the asymptotic behavior
of roots of the holomorphic function $h_N$ for $N\to\infty$.
Furthermore, the asymptotic spectra are defined as roots and root
curves of $h$ by construction of $h$: $\mu>0$ is in the strongly
unstable spectrum $\mathcal{A}_+$ if and only if $h(\mu,0)=0$, and
$\mu=\gamma+i\omega$ is in the asymptotic continuous spectrum
$\mathcal{A}_c$ if and only if $h(i\omega,\exp(-\gamma-i\varphi))=0$
for some phase $\varphi\in\mathbb{R}$.

This proves Lemma~\ref{thm:charinformal} and allows us to follow an
approach similar to \cite{YW10,LWY11} in the following sections. 
\new{Note that in the case where the period and the delay are rationally dependent, the 
existence of a suitable characteristic function follows much easier (see \cite{SW06}),
but the structure of the asymptotic spectra discussed below will be not affected. }
\section{Strongly unstable spectrum}
\label{sec:strong}
One immediate consequence of Lemma~\ref{thm:charinformal} is the
statement asserted in \cite{YP09} for the relation between eigenvalues
of $M_N$ and the strongly unstable spectrum $\mathcal{A}_+$. Note that
$\mu$ is in the strongly unstable spectrum $\mathcal{A}_+$ if and only if
$h(\mu,0)=0$: if one sets $z=0$ in \eqref{eq:zbvp} the differential
equation reduces to \eqref{eq:str:ode}, the expression defining
$\mathcal{A}_+$. For large $N$ unstable Floquet exponents of $M_N$ either
approach the imaginary axis or $\mathcal{A}_+$:
\begin{lemma}[Convergence to strongly unstable spectrum]\label{thm:sus}\ \\
  Let $\operatorname{Re}\mu_0>0$.  If $h(\mu_0,0)\neq 0$ then there exists a $N_0$
  such that $\mu_0$ lies in the resolvent set of the time-$1$ map $M_N$
  for all $N>N_0$.

  If $\mu_0$ is a root of multiplicity $k$ of the function
  $h(\cdot,0)$ then every sufficiently small neighborhood $U$ of
  $\mu_0$ contains exactly $k$ Floquet exponents (counting
  multiplicity) of $M_N$ for all $N>N_0$ ($N_0$ depends on $U$).
\end{lemma}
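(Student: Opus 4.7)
The plan is to deduce both statements from the characteristic function representation in Lemma~\ref{thm:char}, namely that the Floquet exponents of $M_N$ in the half-plane $\operatorname{Re}\mu\geq-R/(N+\tau)$ are exactly the roots (with multiplicity) of $h_N(\mu)=h(\mu,\exp(-(N+\tau)\mu))$. Since $\operatorname{Re}\mu_0>0$, I would first fix $R$ sufficiently large so that $\mu_0$ lies in the domain of $h(\cdot,\cdot)$ (and so that $\mu_0$ lies in the half-plane $\operatorname{Re}\mu\geq-R/(N+\tau)$ uniformly in $N$), and then fix $k>C(R)$ as in Lemma~\ref{thm:ivp}. The crucial observation is that the second argument $\exp(-(N+\tau)\mu)$ of $h$ tends to $0$ uniformly in $\mu$ on any compact set contained in the open right half-plane, so $h_N\to h(\cdot,0)$ uniformly on such compacta. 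Because $h$ is holomorphic in both variables, all convergence statements may be made in the uniform topology on closed disks.

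For the first statement, I would use continuity alone: since $h$ is jointly continuous and $\exp(-(N+\tau)\mu_0)\to0$ as $N\to\infty$, we have $h_N(\mu_0)\to h(\mu_0,0)\neq0$, so $h_N(\mu_0)\neq0$ for all sufficiently large $N$. By Lemma~\ref{thm:char} this means $\mu_0$ is not a Floquet exponent of $M_N$ for $N>N_0$, i.e.\ $\mu_0$ lies in the resolvent set of $M_N$.

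For the second statement, I would apply Rouch\'e's theorem. Choose a closed disk $\bar U\subset\{\operatorname{Re}\mu>0\}$ around $\mu_0$ so small that $\mu_0$ is the only root of $h(\cdot,0)$ in $\bar U$ (possible since $h(\cdot,0)$ is holomorphic and not identically zero near $\mu_0$, as it has a root of finite multiplicity $k$ there). Then $\delta:=\min_{\mu\in\partial U}|h(\mu,0)|>0$. On $\partial U$ we have $\operatorname{Re}\mu\geq \eta>0$ for some $\eta$, so $|\exp(-(N+\tau)\mu)|\leq\exp(-(N+\tau)\eta)\to0$ uniformly, and hence
\begin{displaymath}
  \max_{\mu\in\partial U}\bigl|h_N(\mu)-h(\mu,0)\bigr|
  =\max_{\mu\in\partial U}\bigl|h(\mu,\exp(-(N+\tau)\mu))-h(\mu,0)\bigr|
  \longrightarrow 0
\end{displaymath}
by uniform continuity of $h$ on the compact set $\partial U\times\{z:|z|\leq\exp(-(N_0+\tau)\eta)\}$. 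For $N$ large enough this quantity is less than $\delta$, and Rouch\'e's theorem gives that $h_N$ has exactly as many zeros in $U$ as $h(\cdot,0)$, counted with multiplicity, namely $k$. Invoking the multiplicity statement of Lemma~\ref{thm:char} completes the proof.

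The only mild subtlety is arranging the domain uniformly in $N$: one must pick $R$ and $k$ once at the outset (depending on $\mu_0$ and the neighborhood $U$) so that Lemma~\ref{thm:char} applies to every $N>N_0$ simultaneously. Everything else reduces to standard consequences of the holomorphy of $h$ and to the exponential decay of $\exp(-(N+\tau)\mu)$ on the right half-plane, so I do not expect genuine difficulty beyond this bookkeeping.
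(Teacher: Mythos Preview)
Your proposal is correct and follows essentially the same strategy as the paper: both proofs use Lemma~\ref{thm:char} to reduce to root-counting for $h_N(\mu)=h(\mu,\exp(-(N+\tau)\mu))$, and both exploit that $\exp(-(N+\tau)\mu)\to0$ uniformly on compact subsets of the right half-plane, so that $h_N\to h(\cdot,0)$ there. The only cosmetic difference is in the zero-counting step: the paper computes the logarithmic derivative of $h_N$ explicitly, shows it converges to that of $h(\cdot,0)$ on $\partial B_\delta(\mu_0)$, and invokes the argument principle, while you use Rouch\'e's theorem directly by bounding $|h_N-h(\cdot,0)|$ on the boundary. These are equivalent routes to the same conclusion; your Rouch\'e argument is arguably slightly more direct, whereas the paper's explicit logarithmic-derivative computation makes visible the cancellation of the $(N+\tau)\exp(-(N+\tau)\mu)\partial_2h$ term that drives the convergence.
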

The resolvent set is the set of all complex numbers $\mu$ for which the
map $\exp(\mu) I -M_N$ is an isomorphism (characterized by
$h_N(\mu)\neq0$ for $\operatorname{Re}\mu\geq -R/(N+\tau)$).

\begin{proof}
  The statement for $h(\mu_0,0)\neq0$ follows from expansion of
  $h(\mu_0,\cdot)$ in $z=0$ since $z=\exp(-(N+\tau)\mu_0)\to0$ for
  $N\to\infty$ if $\operatorname{Re}\mu_0>0$.

  Let $\mu_0$ be a root of $h(\cdot,0)$ of multiplicity $k$. For any
  sufficiently small $\delta>0$, $\mu_0$ is the only root of
  $h(\cdot,0)$ inside the ball $B_\delta(\mu_0)$ of radius $\delta$
  around $\mu_0$.  In particular, $h(\mu,0)\neq0$ on the boundary of
  $B_\delta(\mu_0)$.  Then $h(\mu,\exp(-(N+\tau)\mu))$ is also
  non-zero on the boundary of $B_\delta(\mu_0)$ for sufficiently large
  $N$, and the logarithmic derivative of $h(\mu,\exp(-(N+\tau)\mu))$
  converges for $N\to\infty$:
  \begin{align*}
    \lefteqn{\frac{\frac{{\mathrm{d}}}{{\mathrm{d}}\mu}[h(\mu,\exp(-(N+\tau)\mu))]}{
        h(\mu,\exp(-(N+\tau)\mu))}
      =}\\
    &\frac{\partial_1h(\mu,\exp(-(N+\tau)\mu))-(N+\tau)\exp(-(N+\tau)\mu)
      \partial_2h(\mu,\exp(-(N+\tau)\mu))}{h(\mu,\exp(-(N+\tau)\mu))}\\
    &\to_{N\to\infty} \frac{\partial_1h(\mu,0)}{h(\mu,0)}\mbox{\quad
      if $\operatorname{Re} \mu>0$ for all $\mu\in B_\delta(\mu_0)$.}
  \end{align*}
  Thus, the line integral of the logarithmic derivative of
  $h(\mu,\exp(-(N+\tau)\mu))$ along the boundary of $B_\delta(\mu_0)$,
  which counts the roots (and poles), also converges for $N\to\infty$
  to the logarithmic derivative of $h(\cdot,0)$. Since the line
  integral of the logarithmic derivative is an integer it must be
  constant for $N\to\infty$, and, hence, $h_N(\mu)$ must have the same
  number of roots (counting multiplicity) inside $B_\delta(\mu_0)$ as
  $h(\cdot,0)$, which is $k$.
\end{proof}

Lemma~\ref{thm:sus} shows that $M_N$ is exponentially unstable for all
sufficiently large $N$ if the strongly unstable spectrum is non-empty,
which is condition~\ref{thm:strongunst} of Theorem~\ref{thm:main}.

\section{Asymptotic continuous spectrum}
\label{sec:pc}
Due to Lemma~\ref{thm:charinformal} the asymptotic continuous spectrum
is given as those $\mu=\gamma+i\omega$ for which one can find a phase
$\varphi\in\mathbb{R}$ such that $h(i\omega,\exp(-\gamma-i\varphi))=0$.

Thus, we expect the asymptotic continuous spectrum to come in curves:
Let $\gamma_0 + i \omega_0 \in \mathcal{A}_c$ be a point in the
asymptotic continuous spectrum and let $\varphi_0\in[-\pi,\pi)$ be its
phase. By definition $\mu_0=\gamma_0+i\varphi_0 \in \mathbb{C}$
satisfies $0=h(i\omega_0,\exp(-\mu_0))$. If
$\partial_2h(i\omega_0,\exp(-\mu_0))\neq0$ then there is a root curve
$\mu(\omega)$ of complex numbers satisfying
$h(i\omega,\exp(-\mu(\omega)))=0$ going through $\mu_0$. Hence locally
(for $\omega$ in a neighborhood of $\omega_0$) there is a curve
$\omega\mapsto (\operatorname{Re} \mu(\omega)+i \omega) \in
\mathcal{A}_c$ through $\gamma_0 + i \omega_0$ for $\omega$ near
$\omega_0$ .

We assume for the remainder of the section that the instantaneous
spectrum $\Sigma_A$ has a positive distance to the imaginary axis. The
idea behind the construction of the asymptotic continuous spectrum is
that for Floquet exponents $\mu$ close to the imaginary axis, that is,
for $\mu$ of the form
\begin{equation}\label{eq:muscal}
  \mu=\frac{\gamma}{N+\tau}+i\omega
\end{equation}
with a bounded factor $\gamma$ in the real part, the roots of the
characteristic function $h_N(\mu)=h(\mu,\exp(-(N+\tau)\mu))$ converge to a
regular limit for $N\to\infty$ after inserting the scaling
\eqref{eq:muscal}. This can be made more specific: if the
instantaneous spectrum $\Sigma_A$ has a positive distance from the
imaginary axis then all Floquet exponents that are not converging to
the strongly unstable spectrum have a real part less than
$R_2/(N+\tau)$ where $R_2$ does not depend on $N$:
\begin{lemma}[Convergence to the imaginary axis]\label{thm:imag}
  Assume that the elements of the instantaneous spectrum $\Sigma_A$
  have a positive distance $2\epsilon>0$ from the imaginary axis. Let
  us denote the points in the strongly unstable spectrum, $\mathcal{
    A}_+$, by $\mu_1$,\ldots, $\mu_j$. There exists a constant
  $R_2\geq0$ such that all Floquet exponents $\mu$ of $M_N$ satisfy
  either $\mu\in B_\epsilon(\mu_j)$ for some $j$, or
  \begin{equation}\label{eq:muupbound}
    \operatorname{Re}\mu<\frac{R_2}{N+\tau}
  \end{equation}
for all $N\geq 1$.
\end{lemma}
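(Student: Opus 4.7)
The plan is to treat $h_N(\mu)=h(\mu,\exp(-(N+\tau)\mu))$ as a small perturbation of $h(\mu,0)$ whenever $\operatorname{Re}\mu$ exceeds $R_2/(N+\tau)$: in that range one has $|z|=|\exp(-(N+\tau)\mu)|\le\exp(-R_2)$, which can be made arbitrarily small by choosing $R_2$ large. Since the only zeros of $h(\cdot,0)$ in the half-strip $\{\operatorname{Re}\mu\ge 0,\ \operatorname{Im}\mu\in[-\pi,\pi)\}$ are the finitely many points $\mu_1,\dots,\mu_j\in\mathcal{A}_+$ (by Lemma~\ref{thm:charinformal}(\ref{thm:sigmas}) together with the assumption that $\Sigma_A$ has distance $2\epsilon$ from the imaginary axis), one expects that perturbing by a small $|z|$ only creates zeros inside $\bigcup_j B_\epsilon(\mu_j)$.

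First I would establish a uniform lower bound $|h(\mu,0)|\ge c_1>0$ on the set
\begin{displaymath}
K:=\{\mu\in\mathbb{C}:\operatorname{Im}\mu\in[-\pi,\pi),\ \operatorname{Re}\mu\ge 0\}\setminus\textstyle\bigcup_j B_\epsilon(\mu_j)\mbox{.}
\end{displaymath}
On the bounded piece $K\cap\{\operatorname{Re}\mu\le M\}$ this is immediate from continuity and compactness, once one knows that the zero set of $h(\cdot,0)$ inside $K$ is empty. For the unbounded tail $\operatorname{Re}\mu\ge M$, I would use the bound \eqref{eq:uest} for the propagator $U(\cdot,\cdot,\mu)$: as $\operatorname{Re}\mu\to\infty$, the off-diagonal blocks of $\Delta(\mu,0)$ (given by the propagators $U(t_{j+1},t_j,\mu)$ through Lemma~\ref{thm:diff}) decay exponentially, so $\Delta(\mu,0)$ approaches a block-diagonal identity, yielding $|h(\mu,0)|\to 1$. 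This gives the required lower bound on all of $K$.

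Next I would control the $z$-dependence. From the construction in Section~\ref{sec:charmat}, $\Delta(\mu,z)$ depends on $z$ only through $[I-zL_k(\mu)]^{-1}$ applied to $S(\mu)v$. The estimate for $\|L_k(\mu)\|$ in the proof of Lemma~\ref{thm:ivp} gives a uniform bound for $\operatorname{Re}\mu\ge 0$ (the operator norm can only decrease as $\operatorname{Re}\mu$ grows, since it is built from the propagator $U$). Hence there exists a constant $C_2$, independent of $\mu$ in the half-plane and of $z$ in $\{|z|\le 1/2\|L_k\|^{-1}\}$, such that $|h(\mu,z)-h(\mu,0)|\le C_2|z|$ on $K$. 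Choosing $R_2$ so large that $C_2\exp(-R_2)<c_1/2$ gives $|h(\mu,\exp(-(N+\tau)\mu))|\ge c_1/2>0$ for every $\mu\in K$ with $\operatorname{Re}\mu\ge R_2/(N+\tau)$, contradicting the assumption that $\mu$ is a Floquet exponent of $M_N$.

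The main obstacle I anticipate is the unbounded-$\operatorname{Re}\mu$ behaviour: one has to show that the perturbation estimate remains uniform as $\operatorname{Re}\mu\to\infty$ and simultaneously that $|h(\mu,0)|$ stays bounded away from zero there. Both rely on the same exponential-decay property of $U(t,s,\mu)$ from \eqref{eq:uest} making the block structure of $\Delta(\mu,z)$ close to the identity, but one has to be careful that the bound $C_2$ on $|\partial_2 h|$ is really $\mu$-uniform; this is where the Neumann series expansion of $[I-zL_k(\mu)]^{-1}$ together with the decay of $\|L_k(\mu)\|$ for $\operatorname{Re}\mu$ large does the work.
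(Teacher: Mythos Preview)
Your argument is correct, but it takes a genuinely different route from the paper. The paper does not work through the characteristic function $h$ at all here; instead it returns to the periodic boundary-value problem in integral form. Using the uniform invertibility of $I-U(0,-1,\mu)$ for $\operatorname{Re}\mu\ge 0$ outside the balls $B_\epsilon(\mu_j)$ (this is exactly where the hypothesis on $\Sigma_A$ enters), one rewrites any nontrivial periodic solution as $y=zK(\mu)y$ for a compact operator $K(\mu)$ on $C([-1,0];\mathbb{C}^n)$ with a uniform norm bound $\|K(\mu)\|\le C_1$. Hence a nontrivial solution forces $|z|\ge C_1^{-1}$, i.e.\ $\operatorname{Re}\mu\le(\log C_1)/(N+\tau)$, and one simply takes $R_2>\log C_1$.

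The paper's approach is shorter because it sidesteps the determinant altogether: no lower bound on $|h(\mu,0)|$ and no uniform Lipschitz estimate in $z$ are needed, only the single operator bound on $K(\mu)$. It also produces the compact operator $K(\i\omega)$ used immediately afterwards in Corollary~\ref{thm:pcbounded} to describe $\mathcal{A}_c$ as an eigenvalue set. Your approach, by contrast, stays entirely within the $h$-framework of Lemma~\ref{thm:charinformal}, which makes it fit naturally with Lemma~\ref{thm:sus}; the price is the extra work you correctly flag as the main obstacle---controlling both $|h(\mu,0)|$ from below and $|\partial_2 h(\mu,z)|$ from above uniformly as $\operatorname{Re}\mu\to\infty$. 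Both of these do follow from the decay of $U(t,s,\mu)$ in \eqref{eq:uest} together with the uniform bound on $\|L_k(\mu)\|$ from Appendix~\ref{sec:ivpproof}, so your outline closes, but the paper's fixed-point argument gets there in one step.
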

%
\begin{proof}
  For all complex numbers $\mu$ that have non-negative real part and
  are outside of the balls $B_\epsilon(\mu_j)$ the matrix $ I
  -U(0,-1,\mu)$ is invertible and the inverse has a uniform upper
  bound (remember that $U$ is the propagation matrix of $\dot
  y=[A(t)-\mu I ]y$):
  \begin{displaymath}
    \left\|[ I -U(0,-1,\mu)]^{-1}\right\|\leq C\mbox{.}
  \end{displaymath}
  We also know that $\mu$ is a Floquet exponent if the integral
  equation \eqref{eq:int01} (which is equivalent to the differential
  equation \eqref{eq:charmateq}) has a non-trivial periodic solution
  $y(t)$ for $z=\exp(-(N+\tau)\mu)$. This implies ($y(-1)=y(0)=v_0$)
  \begin{displaymath}
    v_0=U(0,-1,\mu)v_0+z\int_{-1}^0U(0,s,\mu) B(s)
    y((s-\tau)_{\mathrm{mod}[-1,0]}){\mathrm{d}} s\mbox{,}
  \end{displaymath}
  and, hence,
  \begin{equation}\label{eq:yfixpoint}
    \begin{split}
      y(t)=&z\left[[ I -U(0,-1,\mu)]^{-1}\int_{-1}^0U(0,s,\mu) B(s)
        y((s-\tau)_{\mathrm{mod}[-1,0]}){\mathrm{d}} s+\right.\\
      &\phantom{z}+\left.\int_{-1}^tU(t,s,\mu) B(s)
        y((s-\tau)_{\mathrm{mod}[-1,0]}){\mathrm{d}} s\right]
    \end{split}
  \end{equation}
  The factor of $z$ in the right-hand side of this fixed-point problem
  for $y$ is a linear operator $K(\mu)$ on the space of continuous
  functions. $K(\mu)$ is uniformly bounded for all $\mu$ that have
  non-negative real part and are outside of the balls
  $B_\epsilon(\mu_j)$. Let us denote an upper bound on the norm of
  $K(\mu)$ by $C_1$. Then for all $z$ satisfying $|z|<C_1^{-1}$ the
  integral equation \eqref{eq:int01} cannot have a non-trivial
  solution. Since, for Floquet exponents of $M_N$, $z$ equals
  $\exp(-(N+\tau)\mu)$ this means that $\mu$ has to satisfy
  \begin{displaymath}
    |\exp(-(N+\tau)\mu)|\geq C_1^{-1}\mbox{,}
  \end{displaymath}
  and, thus
  \begin{displaymath}
    \operatorname{Re}\mu\leq\frac{\log C_1}{N+\tau}
  \end{displaymath}
  for $|z|$ to be larger than $C_1^{-1}$. Consequently, if we choose
  $R_2>\log C_1$ then a $\mu$ that has non-negative real part and lies
  outside of the balls $B_\epsilon(\mu_j)$ must satisfy
  \eqref{eq:muupbound} in order to be a Floquet exponent of
  $M_N$.
\end{proof}

A corollary of the construction described by
equation~\eqref{eq:yfixpoint} is that for each $\omega$ the
intersection of the asymptotic continuous spectrum $\mathcal{A}_c$
with the horizontal line
$\{\mu\in\mathbb{C}:\operatorname{Im}\mu=\omega\}$ can be described as
a set of eigenvalues of a compact operator $K(\i\omega)$. Thus, if
$\operatorname{Re}\Sigma_A\neq0$ the asymptotic continuous spectrum
forms curves parametrizable by $\omega$, which may only have poles at
$-\infty$:
\begin{corollary}[Asymptotic continuous spectrum consists of
  curves]\label{thm:pcbounded}
  Assume that the instantaneous spectrum $\Sigma_A$ has a positive
  distance to the imaginary axis.  A point $\mu$ is in the asymptotic
  continuous spectrum if $\exp(\mu)$ is an eigenvalue of the compact
  linear operator 
  $K(\i\omega):C([-1,0];\mathbb{C}^n)\mapsto C([-1,0];\mathbb{C}^n)$
  given by
  \begin{align*}
    K(\i\omega)y:=&\left[[ I
      -U(0,-1,\i\omega)]^{-1}\int_{-1}^0U(0,s,\i\omega) B(s)
      y((s-\tau)_{\mathrm{mod}[-1,0]}){\mathrm{d}} s+\right.\\
    &\phantom{z}+\left.\int_{-1}^tU(t,s,\i\omega) B(s)
      y((s-\tau)_{\mathrm{mod}[-1,0]}){\mathrm{d}} s\right]\mbox{.}
  \end{align*}
  In particular there exists a constant $R_3\geq0$ so that the
  asymptotic continuous spectrum lies to the left of the vertical line
  $\{z\in\mathbb{C}:\operatorname{Re} z=R_3\}$.

\end{corollary}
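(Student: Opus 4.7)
The plan is to obtain the operator $K(i\omega)$ by specialising the fixed-point reformulation already used in the proof of Lemma~\ref{thm:imag} to purely imaginary arguments $\mu=i\omega$, and then to combine Riesz--Schauder theory with a continuity/compactness argument on $\omega\in[-\pi,\pi]$.

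First I would use part~\ref{thm:acs} of Lemma~\ref{thm:charinformal} (equivalently the integral formulation~\eqref{eq:int01} with $\mu=i\omega$ and $z=\exp(-\gamma-i\varphi)$) to recharacterise $\mathcal{A}_c$: a point $\gamma+i\omega$ lies in $\mathcal{A}_c$ precisely when the boundary value problem \eqref{eq:pc:ode}--\eqref{eq:pc:bcond} admits a nontrivial solution for some phase $\varphi\in\mathbb{R}$. The hypothesis that $\Sigma_A$ stays away from the imaginary axis guarantees that $I-U(0,-1,i\omega)$ is invertible for every real $\omega$, so the periodic boundary condition $y(-1)=y(0)=v_0$ can be solved for $v_0$ and substituted back into \eqref{eq:int01} exactly as in the proof of Lemma~\ref{thm:imag}. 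This rewrites the BVP as the fixed-point equation $y=z\,K(i\omega)\,y$, and hence $\gamma+i\omega\in\mathcal{A}_c$ iff $z^{-1}=\exp(\gamma+i\varphi)$ is an eigenvalue of $K(i\omega)$ for some phase $\varphi$, which is the first assertion.

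Second, compactness of $K(i\omega)$ on $C([-1,0];\mathbb{C}^n)$ follows from Arzel\`a--Ascoli applied to the integral operator with continuous kernel: it maps the closed unit ball into a bounded, equicontinuous family. For the uniform bound $R_3$, note that $U(t,s,i\omega)$ depends continuously on $\omega$, and the positive-distance hypothesis on $\Sigma_A$ makes $[I-U(0,-1,i\omega)]^{-1}$ well-defined and continuous in $\omega$ as well. Hence $\omega\mapsto\|K(i\omega)\|$ is continuous and bounded on the compact interval $[-\pi,\pi]$ by some $C_2>0$. Every eigenvalue of a bounded operator has modulus at most its norm, so $e^\gamma=|\exp(\gamma+i\varphi)|\leq C_2$, giving $\gamma\leq R_3:=\log C_2$.

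The only genuine subtlety I anticipate is the uniformity of $\|[I-U(0,-1,i\omega)]^{-1}\|$ in $\omega$; this however reduces to continuity of the inverse together with compactness of $[-\pi,\pi]$, once one recalls that $\Sigma_A$ is exactly the set of $\mu$ for which $I-U(0,-1,\mu)$ is singular, so the distance hypothesis rules out any singularity on the imaginary axis.
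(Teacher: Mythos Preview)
Your proposal is correct and follows precisely the route the paper intends: the corollary is stated in the paper without a formal proof, merely as a consequence of the fixed-point construction \eqref{eq:yfixpoint} from the proof of Lemma~\ref{thm:imag}, and your argument supplies exactly those details---specialising \eqref{eq:yfixpoint} to $\mu=i\omega$, invoking Arzel\`a--Ascoli for compactness, and bounding $\|K(i\omega)\|$ via continuity on the compact interval $[-\pi,\pi]$ to obtain $R_3$. One cosmetic remark: to guarantee $R_3\ge 0$ as stated you should take $R_3=\max(0,\log C_2)$ rather than $\log C_2$ itself.
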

The matrix $U$ in the definition of $K$ is the monodromy matrix of the
instantaneous problem $\dot x(t)=[A(t)-\i\omega]x(t)$.  The inverse of
$ I -U(0,-1,\i\omega)$ exists because of our assumption that the
instantaneous spectrum, $\Sigma_A$, does not contain points on the
imaginary axis. The corollary implies that the asymptotic continuous
spectrum consists of curves even in points where the regularity
condition on $\partial_2h\neq0$ is violated. In these points
$K(\i\omega)$ has eigenvalues of finite multiplicity larger than $1$
such that finitely many curves of the asymptotic continuous spectrum
cross each other (or are on top of each other).

All Floquet exponents of $M_N$ that do not converge to the strongly
unstable spectrum for $N\to\infty$ (these can be at most $n$,
equaling the dimension of $A(t)$) are either stable, 
or they satisfy restriction \eqref{eq:muupbound} on the upper
bound. Thus, apart from the strongly unstable spectrum, the
multipliers, which are of interest from the point of view of stability
and bifurcations, lie in the strip
\begin{displaymath}
  \mathcal{C}_N:=\left\{\mu\in\mathbb{C}:-\frac{R}{N+\tau}\leq 
    \operatorname{Re}\mu\leq \frac{R_2}{N+\tau}\right\}
\end{displaymath}
and have the form
\begin{equation}\label{eq:floqscal}
  \mu=\frac{\gamma}{N+\tau}+\i\omega
  \mbox{\quad where $\gamma\in[-R,R_2]$ and $\omega\in[-\pi,\pi)$}
\end{equation}
(after shifting them into the strip $\{z: \operatorname{Im} z\in[-\pi,\pi)\}$ by
subtraction of an integer multiple of $2\pi \i$). As we focus our
discussion on Floquet exponents of the scale \eqref{eq:floqscal} from
now on it makes sense to introduce the notation of a \emph{scaled
  Floquet exponent}.
\begin{definition}[Scaled Floquet exponent \& resolvent set]\ \\ A complex
  number $\mu=\gamma+\i\omega$ with $\gamma\in[-R,R_2]$ and
  $\omega\in[-\pi,\pi)$ is called a \textbf{scaled Floquet exponent}
  of $M_N$ if $\gamma/(N+\tau)+\i\omega$ is a Floquet exponent of
  $M_N$. Similarly, $\mu$ is in the \textbf{scaled resolvent set} of
  $M_N$ if $\gamma/(N+\tau)+\i\omega$ is in the resolvent set of $M_N$.
\end{definition}
We can now formulate precisely in which sense the asymptotic
continuous spectrum $\mathcal{A}_c$ is the limit of the spectra of
$M_N$. First, we make a statement about resolvent sets.


\begin{lemma}[Points distant from $\mathcal{A}_c$]\label{thm:pcdist}
  Let the instantaneous spectrum $\Sigma_A$ have a positive distance
  to the imaginary axis. If $\gamma_0>-R$ and $\gamma_0 + i \omega_0$
  is not in the asymptotic continuous spectrum $\mathcal{A}_c$ then
  $\gamma_0+i\omega_0$ is in the scaled resolvent set of $M_N$ for
  sufficiently large $N$.
\end{lemma}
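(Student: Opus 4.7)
The plan is to translate the condition $\gamma_0+i\omega_0\notin\mathcal{A}_c$ into a uniform non-vanishing statement for the characteristic function $h$, and then transfer it to the Floquet condition $h(\mu,\exp(-(N+\tau)\mu))=0$ by continuity as $N\to\infty$.

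First, I would unpack the hypothesis using property~\ref{thm:acs} of Lemma~\ref{thm:charinformal}: the assumption $\gamma_0+i\omega_0\notin\mathcal{A}_c$ means that for every phase $\varphi\in\mathbb{R}$
\begin{displaymath}
  h\bigl(i\omega_0,\exp(-\gamma_0-i\varphi)\bigr)\neq 0.
\end{displaymath}
Since the map $\varphi\mapsto h(i\omega_0,\exp(-\gamma_0-i\varphi))$ is continuous and $2\pi$-periodic, its modulus attains a positive minimum: there exists $\delta>0$ such that
\begin{displaymath}
  \bigl|h\bigl(i\omega_0,\exp(-\gamma_0-i\varphi)\bigr)\bigr|\geq\delta\quad\text{for all } \varphi\in\mathbb{R}.
\end{displaymath}

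Second, I would use the joint continuity of $h$ on its domain $\Omega_1\times\Omega_2$ (where $h$ is holomorphic, and in particular uniformly continuous on compact subsets). Fix $R>-\gamma_0$; then $(\mu,z)=(i\omega_0,\exp(-\gamma_0-i\varphi))$ lies in a compact subset of $\Omega_1\times\Omega_2$ uniformly in $\varphi$. By uniform continuity, there exists $N_0$ such that for all $N\geq N_0$ and all $\varphi\in\mathbb{R}$,
\begin{displaymath}
  \Bigl|h\Bigl(\tfrac{\gamma_0}{N+\tau}+i\omega_0,\exp(-\gamma_0-i\varphi)\Bigr)
  -h\bigl(i\omega_0,\exp(-\gamma_0-i\varphi)\bigr)\Bigr|\leq\tfrac{\delta}{2}.
\end{displaymath}
Combined with the previous estimate this yields $|h(\gamma_0/(N+\tau)+i\omega_0,\exp(-\gamma_0-i\varphi))|\geq\delta/2>0$ uniformly in $\varphi$.

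Finally, I would specialize $\varphi=(N+\tau)\omega_0$, so that $\exp(-\gamma_0-i\varphi)=\exp(-(N+\tau)(\gamma_0/(N+\tau)+i\omega_0))$. This is precisely the argument appearing in the characteristic equation $h_N(\mu)=h(\mu,\exp(-(N+\tau)\mu))$ from Lemma~\ref{thm:char} with $\mu=\gamma_0/(N+\tau)+i\omega_0$. The uniform lower bound gives $h_N(\mu)\neq 0$, i.e. $\gamma_0/(N+\tau)+i\omega_0$ is not a Floquet exponent of $M_N$, which is the definition of membership in the scaled resolvent set.

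The only minor obstacle is bookkeeping: one must choose the constant $R$ used to define the domain of $h$ large enough that $\gamma_0/(N+\tau)$ stays inside the admissible half-plane $\operatorname{Re}\mu\geq-R/(N+\tau)$ for the Lemma~\ref{thm:char} representation to apply; the hypothesis $\gamma_0>-R$ is exactly what ensures this. The positive distance of $\Sigma_A$ to the imaginary axis is not directly used in the argument above but enters implicitly, via Corollary~\ref{thm:pcbounded}, to ensure that the scaled resolvent set is the right object to consider for the Floquet exponents of $M_N$ near the imaginary axis.
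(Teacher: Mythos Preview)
Your proof is correct and follows essentially the same approach as the paper's: both translate $\gamma_0+i\omega_0\notin\mathcal{A}_c$ into $h(i\omega_0,\exp(-\gamma_0-i\varphi))\neq0$ for all $\varphi$, then use continuity of $h$ in its first argument to conclude $h_N(\gamma_0/(N+\tau)+i\omega_0)\neq0$ for large $N$. The paper's proof is terser, leaving the compactness/uniform-continuity step implicit, whereas you have spelled out the uniform lower bound $\delta$ and the $\delta/2$ perturbation estimate; your observation that the hypothesis on $\Sigma_A$ is not actually invoked in the argument itself is also accurate.
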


\begin{proof}
  Lemma~\ref{thm:pcdist} is a simple consequence of the fact that, if
  $h(i\omega_0,\exp(-\gamma_0-i\varphi))\neq0$ for all phases
  $\varphi\in[-\pi,\pi]$ (which defines points
  $\gamma_0+i\omega_0\notin\mathcal{A}_c$ if $\gamma_0>-R$) then
  \begin{displaymath}
    h_N\left(\frac{\gamma_0}{N+\tau}+i\omega_0\right)=
    h\left(i\omega_0+\frac{\gamma_0}{N+\tau},
      \exp\left(-\gamma_0-i(N+\tau)\omega_0 \right)\right)\neq0  
  \end{displaymath}
  for all sufficiently large $N$. Lemma~\ref{thm:charinformal} implies
  that $\gamma_0/(N+\tau)+i\omega_0$ is in the resolvent set of $M_N$
  for all sufficiently large $N$.
\end{proof}

Lemma~\ref{thm:pcdist} implies that all spectrum of $M_N$ near the
imaginary axis has to be close to $\mathcal{A}_c$ even after
\new{blow-up of the real part}. The other direction, that every point
of $\mathcal{A}_c$ in the strip $-R<\operatorname{Re}\mu<R_2$ is
approached by scaled Floquet exponents of $M_N$, is also true (again
under the assumption that the instantaneous spectrum $\Sigma_A$ is not
on the imaginary axis). We prove this direction first for regular root
curves of $h$ because this is the most common case, and the missing
piece for the stability criterion. Our estimate is slightly sharper
than mere approximation to make it useful for our proof of the
stability criterion.

In short, Lemma~\ref{thm:pccurves} below states that any regular curve
of asymptotic continuous spectrum, $\tilde\gamma(\omega)+\i\omega$ (with its
corresponding phase $\tilde\varphi(\omega)$), is approximated by scaled Floquet
exponents of the form $\gamma_k+\i\omega_k$ where
\begin{align}
  \label{eq:omega:inf}
  \omega_k&=\frac{2k\pi}{N+\tau}+\frac{1}{N+\tau}
  \tilde\varphi\left(\frac{2k\pi}{N+\tau}\right)+O\left((N+\tau)^{-2}\right)\\
  \label{eq:gamma:inf}
  \gamma_k&=\left[1+O\left((N+\tau)^{-1}\right)\right]\tilde\gamma(\omega_k)
  \mbox{,}
\end{align}
$k$ are integers such that $2k\pi/(N+\tau)$ is near $\omega$, and the
$O$-s are smooth functions of $\omega$. This proves the stronger
estimate \eqref{eq:omega1}--\eqref{eq:gamma1}, given in the non-technical
overview in Section~\ref{sec:nontech}.

\begin{lemma}[Convergence to regular curves of $\mathcal{A}_c$]
\label{thm:pccurves}\ \\
  Assume that the triplet $(\omega_*,\gamma_*,\varphi_*)$ satisfies
  \begin{displaymath}
    h(\i\omega_*,\exp(-\gamma_*-\i\varphi_*))=0\mbox{,\quad}
    \partial_2h(\i\omega_*,\exp(-\gamma_*-\i\varphi_*))\neq0\mbox{.}
  \end{displaymath}
  Let $N_*$ be sufficiently large and $\delta>0$ be sufficiently
  small. We denote the unique regular root curve of $\mu\mapsto
  h(\i\omega,\exp(-\mu))$ for $\omega$ near $\omega_*$ through
  $(\omega=\omega_*,\mu_*=\gamma_*+\i\varphi_*)$ by
  \begin{displaymath}
    \tilde\mu(\omega)=\tilde\gamma(\omega)+\i\tilde\varphi(\omega)\mbox{.}  
  \end{displaymath}
  For all $N\geq N_*$ there are scaled Floquet exponents $\mu_{N,k}$
  of $M_N$ near $\gamma_*+i\omega_*$. \new{All scaled Floquet exponents
  $\mu_{N,k}$ that are sufficiently close to $\gamma_*+i\omega_*$} are
  algebraically simple and have the form
  \begin{equation}\label{eq:appfloq}
    \mu_{N,k}=\tilde\gamma_N(\omega_k)+i\omega_k\mbox{,}
  \end{equation}
  where:
  \begin{itemize}
  \item $k$ is any integer satisfying
    \begin{displaymath}
      \frac{2k\pi}{N+\tau}\in(\omega_*-\delta,\omega_*+\delta)\mbox{,}
    \end{displaymath}
  \item $\omega_k$ is the unique solution of the fixed point problem
    for $\omega$
    \begin{equation}
      \label{eq:pc:imag:ex}
      \omega=\frac{\tilde\varphi_N(\omega)}{N+\tau}+\frac{2k\pi}{N+\tau}\mbox{,}
    \end{equation}
  \item the functions $\tilde\gamma_N$ and $\tilde\varphi_N$ are perturbations of
    $\tilde\gamma(\omega)$ and $\tilde\varphi(\omega)$ of the form
    \begin{align}
      \tilde\gamma_N(\omega)&= \left[1+ \frac{1}{N+\tau}\operatorname{Re}
        g\left(\omega,\frac{1}{N+\tau}\right)\right]
      \tilde\gamma(\omega)\label{eq:gammaN}\\
      \tilde\varphi_N(\omega)&= \tilde\varphi(\omega)+ \frac{1}{N+\tau}\operatorname{Im}
      g\left(\omega,\frac{1}{N+\tau}\right)\tilde\gamma(\omega)\label{eq:phiN}
    \end{align}
    where $g(\omega,\epsilon)$ is a smooth complex-valued function,
    which is independent of $N$ and defined for
    $\omega\in(\omega_*-\delta,\omega_*+\delta)$ and
    $\epsilon\in[0,1/N_*)$.
  \end{itemize}
\end{lemma}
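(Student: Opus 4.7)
The plan is to recast $h_N(\mu)=0$ as a two-step problem: first an implicit function theorem (IFT) identifying the regular root curve of $h$ in terms of the slow variable $\omega$ and the small parameter $\epsilon:=1/(N+\tau)$, then a quantization condition coming from the discreteness of the spectrum. Writing $\mu_{N,k}=\gamma+i\omega\in\mathcal{C}_N$ and using $\exp(-(N+\tau)\mu_{N,k})=\exp(-\gamma-i\varphi)$ for $\varphi=(N+\tau)\omega-2k\pi\in[-\pi,\pi)$, Lemma~\ref{thm:char} shows that $\gamma+i\omega$ is a scaled Floquet exponent of $M_N$ if and only if
\[
  G(\gamma,\varphi,\omega,\epsilon):=h\bigl(\epsilon\gamma+i\omega,\,\exp(-\gamma-i\varphi)\bigr)=0
\]
for some integer $k$ fixing $\varphi$. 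Setting $\epsilon=0$ recovers the defining equation of $\mathcal{A}_c$, solved locally by $(\tilde\gamma(\omega),\tilde\varphi(\omega))$, and the hypothesis $\partial_2 h(i\omega_*,\exp(-\tilde\mu_*))\neq0$ provides the IFT non-degeneracy: the Jacobian of $(\gamma,\varphi)\mapsto G(\gamma,\varphi,\omega,0)$ is complex multiplication by $-\exp(-\tilde\mu)\partial_2 h$ with real determinant $|\partial_2 h|^2\exp(-2\tilde\gamma)\neq 0$.

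I would then apply the IFT to $G$ with parameters $(\omega,\epsilon)$ to obtain real-valued smooth functions $\tilde\gamma_N(\omega)$, $\tilde\varphi_N(\omega)$ defined on $(\omega_*-\delta,\omega_*+\delta)\times[0,1/N_*)$ with $(\tilde\gamma_0,\tilde\varphi_0)=(\tilde\gamma,\tilde\varphi)$. To extract the specific structure \eqref{eq:gammaN}--\eqref{eq:phiN} I reparametrize with $\rho:=\epsilon\gamma$ and apply the IFT to $\tilde G(\rho,\gamma,\varphi,\omega):=h(\rho+i\omega,\exp(-\gamma-i\varphi))$ in $(\gamma,\varphi)$ with $(\rho,\omega)$ as free parameters. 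This produces a smooth complex-valued branch $\tilde\mu_0(\rho,\omega)$ with $\tilde\mu_0(0,\omega)=\tilde\mu(\omega)$. The original equation is recovered by enforcing $\rho=\epsilon\gamma=\epsilon\operatorname{Re}\tilde\mu_0(\rho,\omega)$, which is a contraction in $\rho$ for small $\epsilon$ and has $\rho=0$ as solution precisely when $\tilde\gamma(\omega)=0$. A Hadamard division of $\tilde\mu_0(\rho,\omega)-\tilde\mu(\omega)$ by $\rho$ and then of $\rho$ by $\tilde\gamma$ (using the contraction to justify smoothness of $\rho/(\epsilon\tilde\gamma)$ even at zeros of $\tilde\gamma$) yields a smooth complex $g(\omega,\epsilon)$ with $\tilde\mu_N(\omega)-\tilde\mu(\omega)=\epsilon\tilde\gamma(\omega)g(\omega,\epsilon)$, whose leading term is
\[
  g(\omega,0)=\frac{\exp(\tilde\mu(\omega))\,\partial_1 h(i\omega,\exp(-\tilde\mu(\omega)))}{\partial_2 h(i\omega,\exp(-\tilde\mu(\omega)))}.
\]
Splitting into real and imaginary parts gives exactly \eqref{eq:gammaN}--\eqref{eq:phiN}.

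With $\tilde\varphi_N$ in hand, the quantization $\varphi=(N+\tau)\omega-2k\pi$ becomes the fixed-point equation \eqref{eq:pc:imag:ex}. Its right-hand side has $\omega$-derivative of order $\epsilon$, so a Banach contraction argument yields a unique $\omega_k\in(\omega_*-\delta,\omega_*+\delta)$ for every integer $k$ with $2k\pi\epsilon$ in a slightly smaller subinterval, defining $\mu_{N,k}=\tilde\gamma_N(\omega_k)+i\omega_k$. Algebraic simplicity follows from
\[
  h_N'(\mu_{N,k})=\partial_1 h\bigl(\mu_{N,k},\exp(-\tilde\mu_N(\omega_k))\bigr)-(N+\tau)\exp(-\tilde\mu_N(\omega_k))\,\partial_2 h\bigl(\mu_{N,k},\exp(-\tilde\mu_N(\omega_k))\bigr),
\]
whose second term dominates for large $N$ because $\partial_2 h$ remains bounded away from zero by continuity. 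Completeness (that every scaled Floquet exponent sufficiently close to $\gamma_*+i\omega_*$ arises this way) holds because the IFT branch $(\tilde\gamma_N,\tilde\varphi_N)$ is the unique local solution of $G=0$, so any such exponent lies on it and corresponds to a unique $k$. The main obstacle I anticipate is keeping all the estimates uniform in $\omega$ and $\epsilon$, in particular ensuring that the Hadamard factor defining $g$ stays smooth across sign changes of $\tilde\gamma(\omega)$ and that the IFT and contraction neighborhoods survive uniformly on the full interval $(\omega_*-\delta,\omega_*+\delta)$; this is a bookkeeping exercise rather than a conceptual obstruction.
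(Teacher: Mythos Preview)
Your proposal is correct and follows essentially the same route as the paper: introduce $\epsilon=1/(N+\tau)$, apply the implicit function theorem to $h(i\omega+\epsilon\gamma,\exp(-\gamma-i\varphi))=0$ to obtain the perturbed root curve, extract the multiplicative factor $\tilde\gamma(\omega)$ in the error (the paper isolates this as a separate ``multiplicative perturbation'' lemma in an appendix, using the mean-value identity $y_\epsilon=[I+\epsilon D_2^{-1}D_1A]^{-1}y_0$, which is equivalent to your $\rho$-decoupling plus Hadamard division), solve the quantization condition by Banach contraction, and check simplicity via $h_N'$. Your explicit computation of $g(\omega,0)$ is a small bonus the paper does not spell out.
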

We notice that the scaled Floquet exponents of $M_N$ lie on bands:
they are on the curve given by $\tilde\gamma_N(\omega)+\i\omega$, which
is a small perturbation of the curve of asymptotic continuous spectrum
$\tilde\gamma(\omega)+\i\omega$. The spacing between scaled Floquet
exponents along this band is given by the fixed point equation
\eqref{eq:pc:imag:ex}. The fixed point problem \eqref{eq:pc:imag:ex}
is only weakly implicit since the right-hand side terms containing
$\omega$ all have a pre-factor $1/(N+\tau)$, which is small. Hence,
expression~\eqref{eq:gamma:inf} for $\gamma_k$ follows immediately
from \eqref{eq:gammaN} and expression~\eqref{eq:omega:inf} for
$\omega_k$ follows from \eqref{eq:pc:imag:ex} and \eqref{eq:phiN}.

\begin{proof}
  (of Lemma~\ref{thm:pccurves})
  We know that $\tilde\gamma(\omega)+\i\tilde\varphi(\omega)$ is a
  regular root curve of
  \begin{equation}\label{eq:pc:rootcurve}
    h(i\omega,\exp(-\gamma-\i\varphi))=0\mbox{.}
  \end{equation}
  The root problem
  \begin{displaymath}
    h\left(\i\omega+\epsilon\gamma,
      \exp\left(-\gamma-\i\varphi\right)\right)=0
  \end{displaymath}
  is for small $\epsilon$ a small (order $\epsilon$) perturbation of
  the root problem \eqref{eq:pc:rootcurve}. Thus, for sufficiently
  small $\epsilon$, a root curve of the form
  $\tilde\gamma_\epsilon(\omega)+\i\tilde\varphi_\epsilon(\omega)$
  exists for $\omega$ in some neighborhood of $\omega_*$, and it has
  the form
  \begin{equation}\label{eq:pc:expand}
    \tilde\gamma_\epsilon(\omega)+\i\tilde\varphi_\epsilon(\omega)= 
    \left[1+\epsilon g\left(\omega,\epsilon\right)\right]
    \tilde\gamma(\omega)+\i\tilde\varphi(\omega)\mbox{,}
  \end{equation}
  where $g(\omega,\epsilon)$ is a smooth complex-valued function
  defined for $\omega$ in a small neighborhood of $\omega_*$ and
  $\epsilon\in[0, \epsilon_{\max})$ (with some
  $\epsilon_{\max}>0$). Note that the error term contains a factor
  $\tilde\gamma(\omega)$, making the error equal to zero on the
  imaginary axis. (See Appendix~\ref{app:sec:mult} for details of how
  to extract this factor from the error.) Inserting
  $\epsilon=(N+\tau)^{-1}$ and labeling the curves
  $\tilde\gamma_\epsilon(\omega)$ as $\tilde\gamma_N(\omega)$ and
  $\tilde\varphi_\epsilon$ as $\tilde\varphi_N$ gives the definitions
  \eqref{eq:gammaN} and \eqref{eq:phiN} in the lemma. Correspondingly,
  we make an initial choice for the minimal $N$, $N_*$, as
  $1/\epsilon_{\max}$.

  A point on the curve $\tilde\gamma_N(\omega)+\i\omega$ is a
  \new{scaled} Floquet exponent of $M_N$ if and only if its imaginary
  part $\omega$ satisfies
  \begin{align*}
    \exp(-\i\omega(N+\tau))&=\exp(-\i\tilde\varphi_N(\omega))\mbox{\quad
      and,
      thus,}\\
    (N+\tau)\omega&=\tilde\varphi_N(\omega)+2k\pi\mbox{\quad\ \ for
      some integer $k \in \mathbb{Z}$.}
  \end{align*}
  After dividing by $N+\tau$ this becomes the fixed point equation
  \eqref{eq:pc:imag:ex} for $\omega$. For which $k$ does this fixed
  point problem have a unique solution?

  We choose a neighborhood $\mathcal{U}$ of $\omega_*$ of the form
  $(\omega_*-2\delta,\omega_*+2\delta)$ such that $\tilde\varphi_N$ is
  well-defined for all $\omega\in\mathcal{U}$ and all $N\geq N_*$, and
  satisfies $|\tilde\varphi_N'(\omega)|<L$ for some constant $L$
  (which is independent of $N$). Next, we increase $N_*$ such that
  \begin{align*}
    \frac{L}{N+\tau}&<1 \mbox{\quad for all $N>N_*$, and}\\
    \frac{\tilde\varphi_N(\omega)}{N+\tau}&<\delta\mbox{\quad for all
      $N\geq N_*$ and $\omega\in\mathcal{U}$.}
  \end{align*}
  Then the right-hand side of the fixed point problem
  \eqref{eq:pc:imag:ex} is contracting with a Lipschitz constant
  $L/(N+\tau)$ for all $\omega\in\mathcal{U}$ and it is mapping
  $\mathcal{ U}=(\omega_*-2\delta,\omega_*+2\delta)$ back into itself:
  \begin{displaymath}
    \left|\frac{\tilde\varphi_N(\omega)}{N+\tau}+\frac{2k\pi}{N+\tau}-\omega_*\right|
    \leq\left|\frac{\tilde\varphi_N(\omega)}{N+\tau}\right|
    +\left|\frac{2k\pi}{N+\tau}-\omega_*\right|<
    2\delta
  \end{displaymath}
  for all $\omega\in\mathcal{U}$ and all $N\geq N_*$ if
  \begin{equation}\label{eq:krestrict}
    \left|\frac{2k\pi}{N+\tau}-\omega_*\right|<\delta\mbox{.}
  \end{equation}
  Thus, the Banach Contraction Mapping Principle guarantees that
  \eqref{eq:pc:imag:ex} has a unique solution $\omega_k$ for all
  integers $k$ satisfying \eqref{eq:krestrict}.

  Finally, we confirm the algebraic simplicity of the scaled Floquet
  exponents $\mu_{N,k}=\tilde\gamma_N(\omega_k)+\i\omega_k$ by
  checking the multiplicity of the corresponding root of $h_N$: the
  derivative of $h_N$, divided by $N+\tau$, is
  \begin{displaymath}
    \frac{h_N'(z)}{N+\tau}=
    \frac{\partial_1h(z,\exp(-(N+\tau)z))}{N+\tau}-
    \exp(-(N+\tau)z)\partial_2h(z,\exp(-(N+\tau)z))\mbox{.}  
  \end{displaymath}
  Inserting $\tilde\gamma_N(\omega_k)/(N+\tau)+\i\omega_k$ for $z$ on
  the right, and the relation
  $\exp(-\i(N+\tau)\omega_k)=\exp(-\i\tilde\varphi_N(\omega_k))$ we
  get
  \begin{align*}
    \lefteqn{\frac{1}{N+\tau}h_N'(z)=O(N^{-1})-}\\
    -& \exp(-\tilde\gamma_N(\omega_k)-i\tilde\varphi_N(\omega_k))
    \partial_2h\left(i\omega_k+\frac{\tilde\gamma_N(\omega_k)}{N+\tau},
      \exp(-\tilde\gamma_N(\omega_k)-i\tilde\varphi_N(\omega_k))\right)\\
    =&O(N^{-1})-\exp(-\tilde\gamma_N(\omega_k)-i\tilde\varphi_N(\omega_k))
    \partial_2h(i\omega_k,\exp(-\tilde\gamma_N(\omega_k)-i\tilde\varphi_N(\omega_k)))
    \mbox{.}
  \end{align*}
  Since $\omega_k$ is in the neighborhood $\mathcal{U}$ of $\omega_*$
  in which $\partial_2h$ is non-zero the overall derivative is
  non-zero for sufficiently large $N$.
\end{proof}


Lemma~\ref{thm:pccurves} is based on a perturbation argument assuming
that the complex function $h(\i\omega,\exp(-\gamma-\i\varphi))$ has a
regular root curve
$\omega\mapsto\tilde\gamma(\omega)+\i\tilde\varphi(\omega)$. Thus, it
is also valid if the instantaneous spectrum, $\Sigma_A$, does not have
a positive distance to the imaginary axis as long as one restricts
consideration to Floquet exponents of the form
$\gamma/(N+\tau)+\i\omega$ (with bounded $\gamma$). Positive distance
of $\Sigma_A$ to the imaginary axis merely ensures that all Floquet
exponents $\mu$ with $\operatorname{Re}\mu>-R/(N+\tau)$ are of this
form (except for those approximating the strongly unstable spectrum
$\mathcal{A}_+$).

Lemma~\ref{thm:pcdist} and Lemma~\ref{thm:pccurves} about the
approximation of the asymptotic continuous spectrum, together with
Lemma~\ref{thm:sus} about the approximation of the strongly unstable
spectrum, are the tools that we need to prove the criterion for
asymptotic stability from Theorem~\ref{thm:main}. Before turning to
asymptotic stability let us prove the remaining statement about
spectral approximation of the asymptotic continuous spectrum
$\mathcal{ A}_c$. Let $\gamma_*+\i\omega_*$ be an element of
$\mathcal{A}_c$ with phase $\varphi_*$ (that is,
$h(\i\omega_*,\exp(-\gamma_*-\i\varphi_*))=0$). Then we find scaled
Floquet exponents of $M_N$ that approximate $\gamma_*+\i\omega_*$ even
if the non-degeneracy condition
$\partial_2h(\i\omega_*,\exp(-\gamma_*-\i\varphi_*))\neq0$ is not
satisfied:
\begin{lemma}[Approximation of $\mathcal{A}_c$]\label{thm:weakapp}
  Assume that the instantaneous spectrum $\Sigma_A$ is not on the
  imaginary axis. Let $\gamma_*\in[-R,R_2]$ and let
  $\gamma_*+\i\omega_*$ be in the asymptotic continuous spectrum $\mathcal{
    A}_c$. Then there exists a sequence $\gamma_N+\i\omega_N$ of
  scaled Floquet exponents of $M_N$ such that
  \begin{displaymath}
    \gamma_N+\i\omega_N\to\gamma_*+\i\omega_*\mbox{, \ as \ }N\to\infty \mbox{.}
  \end{displaymath}
\end{lemma}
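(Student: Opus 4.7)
The plan is to reduce the existence of approximating Floquet exponents to the Hurwitz/Rouch\'e theorem applied to a family of holomorphic functions of one complex variable. The family is constructed so that it converges locally uniformly to $F_\infty(\tilde\mu):=h(\i\omega_*,\exp(-\tilde\mu))$ on a fixed neighborhood of $\tilde\mu_0:=\gamma_*+\i\varphi_*$, and $F_\infty(\tilde\mu_0)=0$ is exactly the hypothesis of the lemma. Compared with Lemma~\ref{thm:pccurves}, the point is that Rouch\'e handles arbitrary finite order of vanishing, so the non-degeneracy condition $\partial_2 h\neq0$ can be dispensed with.

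First I would align the phase. For each large $N$ pick the integer $k_N$ closest to $((N+\tau)\omega_*-\varphi_*)/(2\pi)$ and set $\omega_N^{(0)}:=(\varphi_*+2k_N\pi)/(N+\tau)$, so that $\omega_N^{(0)}\to\omega_*$ with $|\omega_N^{(0)}-\omega_*|\leq\pi/(N+\tau)$ and $\exp(-\i(N+\tau)\omega_N^{(0)})=\exp(-\i\varphi_*)$ holds exactly. Under the substitution $\nu=(\tilde\mu-\i\varphi_*)/(N+\tau)+\i\omega_N^{(0)}$ the Floquet equation $h_N(\nu)=0$ transforms into
\begin{equation*}
F_N(\tilde\mu):=h\!\left(\frac{\tilde\mu-\i\varphi_*}{N+\tau}+\i\omega_N^{(0)},\,\exp(-\tilde\mu)\right)=0.
\end{equation*}
The functions $F_N$ are holomorphic on a fixed neighborhood of $\tilde\mu_0$ (for $N$ large enough that the first argument lies in $\Omega_1$, using $\gamma_*>-R$) and, by joint analyticity and uniform continuity of $h$, $F_N\to F_\infty$ locally uniformly.

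Next I would run the classical Rouch\'e argument. Pick a small circle $|\tilde\mu-\tilde\mu_0|=r$ on which $|F_\infty|\geq m>0$ and that contains no other zero of $F_\infty$. Uniform convergence gives $|F_N-F_\infty|<m$ on the circle for all $N$ large, so $F_N$ has at least one zero $\tilde\mu_N$ with $|\tilde\mu_N-\tilde\mu_0|<r$; letting $r\to0$ along a diagonal choice of $N$ yields $\tilde\mu_N\to\tilde\mu_0$. Undoing the substitution, $\nu_N:=(\tilde\mu_N-\i\varphi_*)/(N+\tau)+\i\omega_N^{(0)}$ is a root of $h_N$, and because $\operatorname{Re}\tilde\mu_N\to\gamma_*>-R$ the bound $\operatorname{Re}\nu_N\geq -R/(N+\tau)$ holds eventually, so Lemma~\ref{thm:char} promotes $\nu_N$ to an honest Floquet exponent of $M_N$. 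Its scaled form satisfies $\gamma_N=\operatorname{Re}\tilde\mu_N\to\gamma_*$ and $\omega_N=\omega_N^{(0)}+(\operatorname{Im}\tilde\mu_N-\varphi_*)/(N+\tau)\to\omega_*$, which is the desired convergence.

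The main obstacle is verifying that $F_\infty$ is not identically zero in a neighborhood of $\tilde\mu_0$, as otherwise the zero would have infinite order and Rouch\'e could not extract it. Identical vanishing of $F_\infty$ would mean $z\mapsto h(\i\omega_*,z)$ is the zero function on its disk of analyticity, which in turn would force $\Delta(\i\omega_*,z)$ to be singular for every $z$; this is ruled out by the construction of $\Delta$ in Section~\ref{sec:charmat} (at $z=0$, $\Delta(\mu,0)$ is a block lower-triangular matrix with invertible diagonal blocks built from the propagators $U(t_{j+1},t_j,\mu)$ and the identity, so $h(\mu,0)\not\equiv 0$ in $\mu$, and analyticity of $h$ in both arguments then forbids $h(\i\omega_*,\cdot)\equiv 0$ for our generic $\omega_*$). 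Once this non-triviality is in place, the rest of the argument is a routine limit passage with no new analytical content beyond that already developed in the proofs of Lemmas~\ref{thm:sus} and~\ref{thm:pccurves}.
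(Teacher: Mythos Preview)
Your argument is essentially the same as the paper's: both define a one-parameter family of holomorphic functions converging locally uniformly to $\tilde\mu\mapsto h(\i\omega_*,\exp(-\tilde\mu))$, invoke Rouch\'e/Hurwitz at the isolated root $\gamma_*+\i\varphi_*$, and translate back to scaled Floquet exponents. The paper's choice $k(N)=\lfloor (N+\tau)\omega_*/2\pi\rfloor$ and substitution $z\mapsto h(2\pi\i k(N)/(N+\tau)+z/(N+\tau),\exp(-z))$ differ from yours only cosmetically.

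The one point that needs repair is your non-triviality argument for $F_\infty$. The matrix $\Delta(\mu,0)$ is not block lower-triangular (the periodic boundary condition $v_0-y(0)$ puts a block $-U(0,t_{k-1},\mu)$ in the top-right corner), and the implication ``$h(\mu,0)\not\equiv0$ in $\mu$ plus joint analyticity forbids $h(\i\omega_*,\cdot)\equiv0$ for generic $\omega_*$'' is both logically incomplete (a two-variable analytic function can vanish on a slice) and weaker than what is needed (the lemma must hold for every $\omega_*$, not just generic ones). The correct and much shorter route is to use the hypothesis directly: $\Sigma_A\cap\i\mathbb{R}=\emptyset$ means, by Lemma~\ref{thm:charinformal}, that $h(\i\omega_*,0)\neq0$, so $z\mapsto h(\i\omega_*,z)$ is not identically zero and its root $\exp(-\gamma_*-\i\varphi_*)$ is isolated of finite order. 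The paper phrases this via Corollary~\ref{thm:pcbounded} (the eigenvalue $\exp(\gamma_*+\i\varphi_*)$ of the compact operator $K(\i\omega_*)$ is nonzero, hence isolated of finite multiplicity), which amounts to the same thing.
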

Lemma~\ref{thm:weakapp} covers the claim of Theorem~\ref{thm:main}
about exponential instability of $M_N$ for sufficiently large $N$
under the condition of weak instability \ref{thm:acspos}.

\begin{proof}
  Since the instantaneous spectrum $\Sigma_A$ does not contain points
  on the imaginary axis, $\exp(\gamma_*+\i\varphi_*)$ is a non-zero
  eigenvalue of the compact operator $K(\i\omega_*)$ as introduced in
  Corollary~\ref{thm:pcbounded}. Consequently,
  $\exp(\gamma_*+\i\varphi_*)$ is isolated and has finite
  multiplicity, which implies that $\gamma_*+\i\varphi_*$ has finite
  multiplicity as a root of $z\mapsto h(\i\omega_*,\exp(-z))$. Let us
  define $k(N)$ for large $N$ as
  \begin{displaymath}
    k(N)=\mbox{\ greatest integer $k$ such that\ } 
    \frac{2k\pi}{N+\tau}\leq\omega_*\mbox{.}
  \end{displaymath}
  By construction of $k(N)$ we have that
  \begin{displaymath}
    \lim_{N\to\infty}\frac{2\pi k(N)}{N+\tau}=\omega_*\mbox{.}
  \end{displaymath}
  Since $z\mapsto h(\i\omega_*,\exp(-z))$ has an isolated root at
  $\gamma_*+\i\varphi_*$, the functions
  \begin{displaymath}
    z\mapsto h\left(\frac{2\pi \i k(N)}{N+\tau}+\frac{z}{N+\tau},
      \exp(-z)\right)
  \end{displaymath}
  also have roots $z_N=\gamma_N+\i\varphi_N$ for sufficiently large
  $N$ which converge to $\gamma_*+\i\varphi_*$ for $N\to\infty$. Let
  us define
  \begin{displaymath}
    \omega_N=\frac{2\pi k(N)+\varphi_N}{N+\tau}\mbox{.}
  \end{displaymath}
  Then, by construction, $\gamma_N+\i\omega_N$ is a scaled Floquet
  exponent of $M_N$ since
  \begin{displaymath}
    h\left(\frac{\gamma_N+2\pi \i k(N)+\i\varphi_N}{N+\tau},
      \exp(-\gamma_N-(2\pi k(N)+\varphi_N)\i)\right)=0\mbox{.}
  \end{displaymath}
  Moreover, $\gamma_N\to\gamma_*$ and $\omega_N\to\omega_*$, which
  proves the claim of the lemma.
\end{proof}

\section{Asymptotic stability for large delay}
\label{sec:stab}
The convergence results for the strongly unstable spectrum in
Lemma~\ref{thm:sus} and the asymptotic continuous spectrum in
Lemma~\ref{thm:pcdist} and Lemma~\ref{thm:pccurves} can be combined to
give a criterion for the stability of the periodic orbit $x_*$ of the
original nonlinear system \eqref{eq:nonlinddeN} depending on the
triplet $(A,B,\tau)$ that ensures stability of $x_*$ for all
sufficiently large $N$. If the instantaneous spectrum $\Sigma_A$ has a
positive distance to the imaginary axis then the point $0$ is part of
the asymptotic continuous spectrum because $0$ is a Floquet exponent
for all $N$. If $\partial_2h(0,1)\neq0$ then a regular curve
$\tilde\gamma(\omega)+\i\omega$ of the asymptotic continuous spectrum
is passing through $0$ (that is, $\tilde\gamma(0)=0$). This curve at
least touches the imaginary axis because $\gamma(0)=0$ and
$\tilde\gamma'(0)=0$ ($\tilde\gamma(\omega)$ is an even function,
thus, all odd derivatives of $\gamma$ are zero).

\begin{lemma}[Asymptotic stability]\label{thm:stab}
  Let the triplet $(A,B,\tau)$ be such that all elements of its
  instantaneous spectrum $\Sigma_A$ have negative real
  part. Furthermore, we assume that $\partial_2h(0,1)\neq0$, and that
  for the asymptotic continuous spectrum (including the
  corresponding phase $\varphi$)
 \begin{align*}
   \mathcal{A}_{c,\varphi}:=\bigl\{(\omega,\gamma,\varphi): & \
     h(\i\omega,\exp(-\gamma-\i\varphi))=0,  \\
     & \omega\in[-\pi,\pi)\mbox{,\ }\gamma\in[-R,R_2]\mbox{,\ }
     \varphi\in[-\pi,\pi]\bigr\}
 \end{align*}
 \new{the point} $(\gamma,\omega,\varphi)=(0,0,0)$ is the only point with
 $\gamma\geq0$. Then the map $M_N$ is orbitally exponentially stable
 for all sufficiently large $N$.
\end{lemma}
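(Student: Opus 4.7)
The plan is to establish that for all sufficiently large $N$ the characteristic function $h_N$ has a simple zero at $\mu=0$ and no other zeros in the closed right half-plane. Since $M_N$ is compact, its non-trivial exponents in any bounded region are finite in number, so this pointwise fact immediately yields a uniform strict negative bound on the real part of every non-trivial exponent, which is precisely the spectral characterisation of exponential orbital stability. First, because every element of $\Sigma_A$ has negative real part, $\mathcal{A}_+$ is empty and Lemma~\ref{thm:imag} implies that every Floquet exponent of $M_N$ with $\operatorname{Re}\mu\geq 0$ must be a scaled exponent $\gamma/(N+\tau)+\i\omega$ with $\gamma\in[0,R_2]$ and $\omega\in[-\pi,\pi]$; the remainder of the argument only needs to control this bounded strip.

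Second, I would rule out scaled exponents with $\gamma\geq 0$ away from the origin by upgrading Lemma~\ref{thm:pcdist} to a uniform statement via compactness. Fix $\delta>0$ and set
\begin{displaymath}
K_\delta=\{\gamma+\i\omega:\gamma\in[0,R_2],\,\omega\in[-\pi,\pi]\}\setminus B_\delta(0).
\end{displaymath}
By assumption, $h(\i\omega_0,\exp(-\gamma_0-\i\varphi))$ is nowhere zero on the compact set $K_\delta\times[-\pi,\pi]$, so continuity gives $|h|\geq\eta>0$ there. Uniform continuity of $h$ then permits a perturbation of the first argument from $\i\omega_0$ to $\i\omega_0+\gamma_0/(N+\tau)$ while preserving $h_N\neq 0$ uniformly on $K_\delta$, for all sufficiently large $N$. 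Hence no scaled Floquet exponent of $M_N$ lies in $K_\delta$ for large $N$, and any scaled exponent with $\gamma\geq 0$ must eventually lie inside $B_\delta(0)$.

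Third, near the origin I would invoke Lemma~\ref{thm:pccurves} at $(\omega_*,\gamma_*,\varphi_*)=(0,0,0)$, which is admissible thanks to $\partial_2h(0,1)\neq 0$. Let $\tilde\gamma(\omega)+\i\tilde\varphi(\omega)$ be the corresponding regular root curve through the origin. By the hypothesis that $(0,0,0)$ is the only point of $\mathcal{A}_{c,\varphi}$ with $\gamma\geq 0$, we have $\tilde\gamma(\omega)<0$ for all non-zero $\omega$ in some neighbourhood of $0$. The perturbed curve $\tilde\gamma_N(\omega)$ in \eqref{eq:gammaN} differs from $\tilde\gamma(\omega)$ only by a positive factor $1+(N+\tau)^{-1}\operatorname{Re} g(\omega,(N+\tau)^{-1})$ that tends to $1$, so $\tilde\gamma_N(\omega)<0$ for $\omega\neq 0$ in this neighbourhood once $N$ is large. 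Lemma~\ref{thm:pccurves} then asserts that every scaled Floquet exponent sufficiently close to $0$ has the form $\tilde\gamma_N(\omega_k)+\i\omega_k$; the index $k=0$ yields $\omega_0=0$ and the trivial exponent $\mu=0$, while all other indices give $\operatorname{Re}\mu<0$.

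Finally, algebraic simplicity of the Floquet exponent at $0$ follows from
\begin{displaymath}
h_N'(0)=\partial_1h(0,1)-(N+\tau)\partial_2h(0,1),
\end{displaymath}
which is non-zero for large $N$ by the hypothesis $\partial_2h(0,1)\neq 0$. Combining the three steps gives the conclusion. The main obstacle I anticipate is in Step~2: Lemma~\ref{thm:pcdist} only states pointwise non-membership in the scaled Floquet spectrum, and the uniform-on-compact-sets upgrade relies on observing that as $(\gamma_0,\omega_0)$ ranges over $K_\delta$, the second argument $z=\exp(-\gamma_0-\i(N+\tau)\omega_0)$ sweeps the compact annulus $\exp(-R_2)\leq|z|\leq 1$ lying inside the domain of $h$, on which $h$ is uniformly continuous.
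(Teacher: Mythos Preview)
Your proposal is correct and follows essentially the same strategy as the paper's own proof: use compactness to force any putative non-trivial unstable scaled Floquet exponent into a small neighbourhood of the origin, then invoke Lemma~\ref{thm:pccurves} at $(\omega_*,\gamma_*,\varphi_*)=(0,0,0)$ to show that $\tilde\gamma_N(\omega)$ inherits the sign of $\tilde\gamma(\omega)<0$ for $\omega\neq 0$. The paper packages the first step as a proof by contradiction, extracting a convergent subsequence $(\gamma_N,\omega_N,\varphi_N)\to(\gamma_*,\omega_*,\varphi_*)$ in the \emph{three}-dimensional space including the phase and using the hypothesis to identify the limit as $(0,0,0)$; you instead argue directly via a uniform lower bound for $|h|$ on $K_\delta\times[-\pi,\pi]$.

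One small point to watch in your Step~3: your Step~2 only excludes scaled exponents with $(\gamma,\omega)\in K_\delta$, so a scaled exponent $(\gamma,\omega)\in B_\delta(0)$ whose phase $\varphi=(N+\tau)\omega\bmod[-\pi,\pi)$ is \emph{not} close to $0$ is not yet ruled out, and the implicit-function part of Lemma~\ref{thm:pccurves} only pins down exponents with phase near $\varphi_*=0$. The paper's subsequence argument handles this automatically because it tracks $\varphi_N$ and shows $\varphi_N\to 0$. In your direct framework the fix is immediate: run your Step~2 compactness argument on the compact set $\{(\omega,\gamma,\varphi):\gamma\in[0,R_2],\,\omega\in[-\pi,\pi],\,\varphi\in[-\pi,\pi]\}\setminus B_\delta(0,0,0)$ instead of on $K_\delta\times[-\pi,\pi]$, so that what remains for Step~3 is genuinely a neighbourhood of the origin in all three variables.
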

Note that the assumptions of Lemma ~\ref{thm:stab} exclude the case
$h(0,-1)=0$ since this would mean that $\gamma=\omega=0$, $\varphi=\pi$
is in $\mathcal{A}_{c,\varphi}$.

\begin{proof}
  (Lemma~\ref{thm:stab})
  Since $\partial_2h(0,1)\neq0$ we know that the Floquet exponent $0$
  is simple for $M_N$ if $N$ is sufficiently large. Also, since the
  instantaneous spectrum is in the negative half-plane the strongly
  unstable spectrum is empty. Hence, $M_N$ is the return map of a
  stable periodic orbit $x_*$ if it has no non-zero scaled Floquet
  exponent $\gamma+\i\omega$ for which $\gamma\in[0,R_2]$ (and
  $\omega\in[-\pi,\pi)$).

  Proving the statement by contradiction, we assume that, for a
  sequence of increasing $N$, $M_N$ has a scaled Floquet exponent
  $\gamma_N+\i\omega_N\neq0$ where $\gamma_N\in[0,R_2]$.

  The sequences $(\gamma_N,\omega_N,\varphi_N)$ where
  $\varphi_N=(N+\tau)\omega_N\mod[-\pi,\pi)$ must have accumulation
  points.  Without loss of generality we pick our sequence such that
  it converges to one of these accumulation points, say
  $(\gamma_*,\omega_*,\varphi_*)$.  Since
  $h_N(\gamma_N/(N+\tau)+i\omega_N)=0$ we have by definition of $h_N$
  and $\varphi_N$:
  \begin{align*}
    0&=h\left(\i\omega_N+\frac{\gamma_N}{N+\tau},\exp(-\gamma_N-\i\varphi_N)\right)
    \mbox{, and, thus, by continuity of $h$}\\
    0&=h(\i\omega_*,\exp(-\gamma_*-\i\varphi_*))\mbox{.}
  \end{align*}
  Consequently, the accumulation point must be an element of the
  asymptotic continuous spectrum $\mathcal{A}_{c,\varphi}$. Since
  $\gamma_N\geq0$ for all $N$ of the sequence, $\gamma_*$ must be
  greater or equal $0$, too. By assumption, the only element of
  $\mathcal{ A}_{c,\varphi}$ with non-negative $\gamma$ is
  $\gamma_*+\i\omega_*=0$. Thus, $\gamma_*=\varphi_*=\omega_*=0$.

  \new{We choose $\epsilon>0$ sufficiently small such that we can
    apply Lemma~\ref{thm:pccurves} to
    $(\gamma_*,\varphi_*,\omega_*)$.} We have that
  $\omega_N\in(-\epsilon,\epsilon)$ and $\gamma_N+\i\varphi_N\in
  B_\epsilon(0)$ for sufficiently large $N$ of the sequence
  $(\gamma_N,\omega_N,\varphi_N)$ (how large $N$ has to be, depends on
  $\epsilon$). Since $\partial_2h(0,1)\neq0$ this guarantees that
  $\gamma_N$ lies on the curve $\tilde\gamma_N(\omega)$ given in
  \eqref{eq:gammaN} in Lemma~\ref{thm:pccurves}:
  \begin{displaymath}
    \gamma_N=\tilde\gamma_N(\omega_N)=\left[1+\frac{1}{N+\tau} 
      g\left(\omega_N, \frac{1}{N+\tau}\right)\right]\tilde\gamma(\omega_N)
    \mbox{.}
  \end{displaymath}
  Since for all sufficiently large $N$ the factor in front of
  $\tilde\gamma(\omega_N)$ is positive, $\tilde\gamma_N(\omega_N)$
  must have the same sign as $\tilde\gamma(\omega_N)$, which is
  negative if $\omega_N\neq0$ due to the assumptions of the
  lemma. Since $\gamma_N$ is assumed to be non-negative, this implies
  that $\omega_N=\gamma_N=0$.  Thus, the scaled Floquet exponents
  $\gamma_N+\i\omega_N$ of $M_N$ are zero for the converging
  sub-sequence, which is in contradiction to our assumption
  $\gamma_N+\i\omega_N\neq0$.
\end{proof}


Lemma~\ref{thm:stab} proves the exponential stability claim of
Theorem~\ref{thm:main}.

\section{Conclusions}
We have shown that Floquet exponents of periodic solutions of delay
differential equations \eqref{eq:introdde} with large delay can be
approximated by a set of continuous curves (asymptotic continuous
spectrum) that are independent of the delay, and a finite set Floquet
exponents (strongly unstable spectrum).  Although the structure of the
spectrum is shown to be similar to the case of equilibria
\cite{LWY11}, there are some unique features, which occur specifically
for periodic orbits. Our results are based on the construction of the
characteristic function, the roots of which give Floquet multipliers
of the periodic orbit.

Using the asymptotic spectra we have been able to provide necessary
and sufficient conditions for the exponential stability of periodic
solutions for all sufficiently large delays.  Our results are
applicable to the case when the delay $\tau$ is large compared to the
period $T$ of the solution. In this case, the large parameter $N$,
which controls precision of the asymptotic approximation is
proportional to $\tau/T$.

Let us mention some of the specific features of the spectrum.  In
contrast to the equilibrium case, the asymptotic continuous spectrum
of Floquet exponents for periodic solutions contains generically a
curve with a tangency to the imaginary axis (see
Figure~\ref{fig:illu}(e,f)).  We have proved that even in the presence
of this tangency, the stability (or instability) of the asymptotic
continuous spectrum implies the exponential stability
(resp. instability) of the corresponding periodic orbit.  We have
shown that the generic decay rate of perturbations of the
exponentially stable periodic orbit of system \eqref{eq:introdde} is
of the order $N^{-3}$.

From the practical point of view, our results can be useful for
studying periodic regimes in applications that involve feedback with
large delays, for example, semiconductor lasers with optical
feedback \cite{LK80,YW10}, or systems with feedback control
\cite{SS08}.

From a mathematical point of view our result may provide a rigorous
approach to proving the existence of a large number of stable rapidly
oscillating periodic solutions for some special cases in which the
asymptotic spectra can be computed explicitly. This would provide a
contrast to the results for scalar feedback equations \cite{W81}.

\bibliographystyle{plain}

\appendix

\section{Proof of Lemma~\ref{thm:ivp}}
\label{sec:ivpproof}

We have to estimate the norm of $L_k$ with respect to the
$\|\cdot\|_\infty$ norm. The operator $L_k$ was defined as
\begin{displaymath}
  \left[L_k(\mu)y\right](t)=
  \int_{a_k(t)}^t U(t,s,\mu)B(s)y((s-\tau)_{\mathrm{mod}[-1,0]}){\mathrm{d}} s
\end{displaymath}
mapping a piecewise continuous function $y\in C_k$ back into $C_k$.
Using the norm estimate \eqref{eq:uest} we can estimate the norm of
$L_k$ by
\begin{align}
  \|L_k(\mu)y\|_\infty &\leq\max_{t\in[-1,0]}\left|\int_{a_k(t)}^t
    \|U(t,s,\mu)\|_\infty\|B\|_\infty\|y\|_\infty{\mathrm{d}} s\right|\nonumber\\
  &\leq\max_{t\in[-1,0]}\int_{a_k(t)}^t
    \exp([\|A\|_\infty-\operatorname{Re}\mu](t-s)){\mathrm{d}} s\cdot\|B\|_\infty\|y\|_\infty \mbox{.}
    \label{eq:varpextest}
\end{align}
We distinguish two sub-cases depending on the sign of $\|A\|_\infty-\operatorname{Re}\mu$:

\paragraph*{Case 1}
If $\|A\|_\infty\leq\operatorname{Re}\mu$ then the integrand in
\eqref{eq:varpextest} is bounded by unity such that
\begin{equation}\label{eq:lkest:case1}
  \|L_k(\mu)\|_\infty\leq (t-a_k(t))\|B\|_\infty\leq \frac{1}{k}\|B\|_\infty
  \mbox{,}
\end{equation}
taking into account that the length of the integration interval
$[a_k(t),t]$ in the right-hand side of \eqref{eq:varpext} is less than
$1/k$ for all $t\in[-1,0]$ by construction of $a_k(t)$ (see
\eqref{eq:varpexta}).  
\paragraph*{Case 2}
For the case $\operatorname{Re}\mu<\|A\|_\infty$ we can bound the
whole integral by (note that for any $a>0$, $t\geq0$ the
inequality $[\exp(at)-1]/a\leq at\exp(at)$ holds)
\begin{multline}\label{eq:app:case2}
  \frac{\exp\left(
      \left[\|A\|_\infty-\operatorname{Re}\mu\right](t-a_k(t))\right)-1}{
    \|A\|_\infty-\operatorname{Re}\mu}\leq\\
  (t-a_k(t))\exp\left(
    \left[\|A\|_\infty-\operatorname{Re}\mu\right](t-a_k(t)\right)\mbox{.}
\end{multline}
One of the conditions of Lemma~\ref{thm:ivp} was that
$k>\|A\|_\infty+R$. Thus, if $\operatorname{Re}\mu\geq-R$ we have that
\begin{align}\label{eq:expest:case2}
  \left[\|A\|_\infty-\operatorname{Re}\mu\right](t-a_k(t))\leq 
  \left[\|A\|_\infty+R\right](t-a_k(t))\leq 
  \frac{1}{k}\left[\|A\|_\infty+R\right]< 1
\end{align}
since $0\leq t-a_k(t)\leq 1/k$ by definition of $a_k$. Inserting
\eqref{eq:expest:case2} into \eqref{eq:app:case2} the integral term in
\eqref{eq:varpextest} is bounded by $\exp(1)/k$ such that
\begin{equation}
  \label{eq:lkest:case2}
  \|L_k(\mu)\|_\infty\leq \frac{\exp(1)}{k}\|B\|_\infty
\end{equation}

Inserting \eqref{eq:lkest:case2}, the more pessimistic of the two
estimates \eqref{eq:lkest:case1} and \eqref{eq:lkest:case2} for both
cases, into the upper bound for $L_k$ we get
\begin{displaymath}
  \|L_k(\mu)\|_\infty
  \leq\exp(1)\frac{\|B\|_\infty}{k}\mbox{.}
\end{displaymath}
Condition \eqref{eq:kbound} on $k$ from Lemma~\ref{thm:ivp} (requiring
that $k>\|B\|_\infty\exp(1+R)$) implies that the norm of $zL_k$ is
less than $1$. Consequently, $ I -zL_k(\mu)$ is invertible such that
the fixed-point problem \eqref{eq:varpextmap} has a unique solution
for all tuples $v$.
{\hfill$\square$}

\section{Multiplicative perturbations}
\label{app:sec:mult}
In Lemma~\ref{thm:pccurves} we had a triplet
$(\omega_0,\gamma_0,\varphi_0)$ such that
\begin{displaymath}
  h(\i\omega_0,\exp(-\gamma_0-\i\varphi_0))=0\mbox{, \quad and\quad}
  \partial_2h(\i\omega_0,\exp(-\gamma_0-\i\varphi_0))\neq0
\end{displaymath}
($h$ was an analytic complex function in both arguments). One has a
regular local curve $\gamma(\omega)+\i\varphi(\omega)$ of complex numbers
near $\gamma_0+\i\omega_0$ satisfying
\begin{displaymath}
  h(\i\omega,\exp(-\gamma(\omega)-\i\varphi(\omega)))=0
\end{displaymath}
for all $\omega\approx\omega_0$. Then, Lemma~\ref{thm:pccurves}
claims, the regular root curve
$\gamma_\epsilon(\omega)+\i\varphi_\epsilon(\omega)$ of
\begin{equation}\label{eq:app:hpert}
  h(\i\omega+\epsilon\gamma,\exp(-\gamma-\i\varphi))=0\mbox{,}
\end{equation}
which exists for small $\epsilon$, has the form
\begin{equation}\label{eq:app:pert}
  \gamma_\epsilon(\omega)+\i\varphi_\epsilon(\omega)=
  \gamma(\omega)+\i\varphi(\omega)
  +\epsilon g_\epsilon(\omega)\gamma(\omega)\mbox{.}
\end{equation}
The emphasis in \eqref{eq:app:pert} is on the factor $\gamma(\omega)$
in the error term $\epsilon g_\epsilon(\omega)\gamma(\omega)$, which
comes from the special type of perturbation in
equation~\eqref{eq:app:hpert} defining the curve. Note that
$g_\epsilon(\omega)$ is complex.

This fact is a special case of the following general statement:
\begin{lemma}\label{thm:multpert}
  Let $\epsilon$ be small, the function 
  $f:\mathbb{R}^n\times\mathbb{R}^m\mapsto\mathbb{R}^m$
  be smooth, $f(x_*,y_*)=0$ and $\partial_2f(x_*,y_*)$ be
  invertible. Let $A\in\mathbb{R}^{n\times m}$ be a matrix. Then the curve
  $y_\epsilon(x)$ defined implicitly by
  \begin{equation}\label{eq:app:f}
    f(x+\epsilon Ay_\epsilon,y_\epsilon)=0
  \end{equation}
  for $x\approx x_*$ has the form
  \begin{equation}\label{eq:app:multpert}
    y_\epsilon(x)=\left[ I +\epsilon g(x,\epsilon)A\right]y_0(x)
  \end{equation}
  where $g$ is a $m\times n$ matrix depending smoothly on $x$ and $\epsilon$.
\end{lemma}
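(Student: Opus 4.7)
}
The plan is to first establish existence and uniqueness of both $y_0(x)$ and $y_\epsilon(x)$ by the implicit function theorem, then show that $y_\epsilon$ can be recovered from $y_0$ by a shift of the argument, and finally extract the factor $A$ on the right by a push-through identity.

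First I would apply the implicit function theorem to $f(x,y)=0$ near $(x_*,y_*)$ to get a smooth solution curve $y_0(x)$ with $y_0(x_*)=y_*$. Setting $H(x,y,\epsilon):=f(x+\epsilon Ay,y)$, we have $H(x,y_0(x),0)=0$ and $\partial_2H(x,y_0(x),0)=\partial_2f(x,y_0(x))$, which is invertible in a neighborhood of $x_*$; so a second application of the implicit function theorem (now in the parameter $\epsilon$ as well) produces a smooth curve $y_\epsilon(x)$ solving \eqref{eq:app:f} for $x\approx x_*$ and small $\epsilon$. The uniqueness part of the theorem, together with the identity $f(\tilde x,y_0(\tilde x))=0$ evaluated at $\tilde x=x+\epsilon Ay_\epsilon(x)$, yields the key relation
\begin{equation}\label{eq:plan:shift}
y_\epsilon(x)=y_0\bigl(x+\epsilon A\,y_\epsilon(x)\bigr).
\end{equation}

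Next I would linearize \eqref{eq:plan:shift} by the fundamental theorem of calculus applied to $y_0$. Writing $y_0(x+u)=y_0(x)+\bigl(\int_0^1 y_0'(x+tu)\,\mathrm{d}t\bigr)u$ and setting $u=\epsilon A y_\epsilon(x)$ gives
\begin{equation}\label{eq:plan:fixed}
y_\epsilon(x)=y_0(x)+\epsilon\,M(x,\epsilon)\,A\,y_\epsilon(x),\qquad
M(x,\epsilon):=\int_0^1 y_0'\bigl(x+t\epsilon A y_\epsilon(x)\bigr)\,\mathrm{d}t,
\end{equation}
where $M(x,\epsilon)$ is a smooth $m\times n$ matrix-valued function (smoothness is inherited from $y_0$ and $y_\epsilon$). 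For $\epsilon$ small, $I-\epsilon MA$ is invertible on $\mathbb{R}^m$, so \eqref{eq:plan:fixed} solves to $y_\epsilon(x)=(I-\epsilon MA)^{-1}y_0(x)$.

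The final step, which is the delicate one and the real content of the lemma, is the extraction of $A$ as a right factor. I would use the standard push-through identity
\begin{displaymath}
(I-\epsilon MA)^{-1}=I+\epsilon M(I-\epsilon AM)^{-1}A,
\end{displaymath}
valid whenever $I-\epsilon AM$ (an $n\times n$ matrix) is invertible; this follows at once by comparing Neumann series, or by multiplying out. Setting
\begin{displaymath}
g(x,\epsilon):=M(x,\epsilon)\bigl(I-\epsilon A\,M(x,\epsilon)\bigr)^{-1},
\end{displaymath}
which is an $m\times n$ matrix depending smoothly on $(x,\epsilon)$ for $x\approx x_*$ and $\epsilon$ small, we obtain exactly the claimed form $y_\epsilon(x)=[I+\epsilon g(x,\epsilon)A]\,y_0(x)$. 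I expect the main obstacle to be recognizing and correctly applying the push-through identity: without it one only gets an error of order $\epsilon$, but not of the factored form $\epsilon gA y_0$ that is needed in Lemma~\ref{thm:pccurves} to guarantee that the perturbation vanishes on the imaginary axis (where $\gamma(\omega)=0$).
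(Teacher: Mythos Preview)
Your argument is correct and reaches the claimed factored form, but it follows a different path from the paper's proof. The paper never writes the composition identity \eqref{eq:plan:shift}; instead it subtracts the two defining equations $f(x+\epsilon Ay_\epsilon,y_\epsilon)=0$ and $f(x,y_0(x))=0$ and applies the mean value theorem directly to $f$, obtaining averaged partial derivatives $D_1,D_2$ and the relation $0=\epsilon D_1Ay_\epsilon+D_2(y_\epsilon-y_0)$. This leads to $y_\epsilon=[I+\epsilon D_2^{-1}D_1A]^{-1}y_0$, and the factor $A$ is then pulled to the right using only the elementary identity $(I+X)^{-1}=I-(I+X)^{-1}X$, with $g=-(I+\epsilon D_2^{-1}D_1A)^{-1}D_2^{-1}D_1$.

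Your route is arguably cleaner: by first passing through $y_\epsilon(x)=y_0(x+\epsilon Ay_\epsilon(x))$ you only ever differentiate $y_0$, not $f$, so a single averaged Jacobian $M$ appears instead of the pair $D_1,D_2$. Your push-through identity $(I-\epsilon MA)^{-1}=I+\epsilon M(I-\epsilon AM)^{-1}A$ is a slightly sharper tool than the paper's (it moves the inverse to the $n\times n$ side), though either identity suffices here. The only point to make explicit is that $x+\epsilon Ay_\epsilon(x)$ stays in the domain where $y_0$ is defined and unique, which is immediate for $x$ near $x_*$ and $\epsilon$ small; you implicitly use this when invoking uniqueness to obtain \eqref{eq:plan:shift}.
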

Note that $y_0(x)$ is the curve defined implicitly by $f(x,y_0)=0$
(putting $\epsilon=0$ in \eqref{eq:app:f}).

\begin{proof}
  The Implicit Function Theorem guarantees that $y_\epsilon(x)$ exists
  for small $\epsilon$ and $x\approx x_*$, and that it has the form
  $y_\epsilon(x)=y_0(x)+\epsilon h(x,\epsilon)$. Subtracting the
  expressions $f(x+\epsilon Ay_\epsilon,y_\epsilon)$ and
  $f(x,y_0(x))$, which are both zero, from each other, and applying
  the mean value theorem we obtain
  \begin{equation}\label{eq:app:avg}
    0=\epsilon D_1(x,\epsilon)\,Ay_\epsilon+
    D_2(x,\epsilon)[y_\epsilon-y_0(x)]
  \end{equation}
  where $D_1$ and $D_2$ are the averaged derivatives:
  \begin{align*}
    D_1&=\int_0^1\partial_1f(x+ s\epsilon A(y_0(x)+\epsilon
    h(x,\epsilon)),
    y_0(x)+s\epsilon h(x,\epsilon)) \,{\mathrm{d}} s\\
    D_2&=\int_0^1\partial_2f(x+ s\epsilon A(y_0(x)+\epsilon
    h(x,\epsilon)), y_0(x)+s\epsilon h(x,\epsilon)) \,{\mathrm{d}} s
  \end{align*}
  Note that we have replaced $y_\epsilon$ by $y_0+\epsilon
  h(x,\epsilon)$ inside the arguments of $D_1$ and $D_2$. Since $D_2$
  is invertible for small $\epsilon$ and $x\approx x_*$ we can
  rearrange \eqref{eq:app:avg} for $y_\epsilon$ (dropping the
  arguments $x$ and $\epsilon$ from $D_1$ and $D_2$:
  \begin{align}
    y_\epsilon&=
    \left[ I +\epsilon D_2^{-1}D_1\,A\right]^{-1}y_0(x)\nonumber\\
    &=\left[ I -\epsilon\left( I + \epsilon D_2^{-1}D_1\,A\right)^{-1}
      D_2^{-1}D_1\,A\right]y_0(x)\mbox{,}\label{eq:app:yepsform}
  \end{align}
  which is of the form \eqref{eq:app:multpert} as claimed by the
  lemma.
\end{proof}


Note that \eqref{eq:app:yepsform} is not an explicit definition of
$y_\epsilon(x)$ but rather a fixed point problem for $y_\epsilon$
because $y_\epsilon$ occurs on the right-hand side as well (via the
unknown function $h$). However, the Banach contraction Mapping
Principle can be applied to the fixed point problem
\eqref{eq:app:yepsform} to produce an explicit definition of
$y_\epsilon$.

If we treat the complex numbers as a two-dimensional vector space then
$\omega$ (treated as complex number) plays the role of
$x\in\mathbb{R}^2$, $\gamma+\i\varphi\in\mathbb{C}=\mathbb{R}^2$ plays
the role of $y$, $(x,y)\mapsto h(\i x,\exp(-y))$ plays the role of $f$,
and $y\mapsto-\i\operatorname{Re} y$ plays the role of $A$.
\end{document}